\documentclass[11pt,reqno]{amsart}
\usepackage{amsmath,amsfonts, amssymb, amsthm}
  \usepackage{paralist}
  \usepackage{graphics} 
  \usepackage{epsfig} 
\usepackage{graphicx}  \usepackage{epstopdf}
 \usepackage[colorlinks=true]{hyperref}
\hypersetup{urlcolor=blue, citecolor=red}

\newtheorem{theorem}{Theorem}[section]

\newtheorem{lemma}[theorem]{Lemma}
\newtheorem{proposition}{Proposition}[section]

\theoremstyle{definition}

\newtheorem{remark}{Remark}[section]

\def\ch{\mathrm{ch}}
\newcommand{\cO}{\mathcal O}
\newcommand{\m}{\mathfrak m}
\newcommand{\Tr}{\operatorname{Tr}}
\newcommand{\N}{\operatorname{N}}

\newcommand{\Trd}{\operatorname{Trd}}
\newcommand{\Nrd}{\operatorname{Nrd}}

\newcommand{\F}{\mathbb F}
\newcommand{\Q}{\mathbb Q}
\newcommand{\Z}{\mathbb Z}

\def\l{\left}
\def\r{\right}
\def\bg{\bigg}
\def\({\bg(}
\def\){\bg)}
\def\t{\text}
\def\f{\frac}

\def\sm{\setminus}

\def\Ack{\medskip\noindent {\bf Acknowledgments}}

\begin{document}
\hbox{Preprint}
\medskip

\title[$\Q\sm\Z$ is diophantine over $\Q$ with $7$ unknowns]
      {$\Q\sm\Z$ is diophantine over $\Q$ with $7$ unknowns}
\author[Zhi-Wei Sun]{Zhi-Wei Sun$^\star$}



\address{School of Mathematics, Nanjing
University, Nanjing 210093, People's Republic of China}
\email{zwsun@nju.edu.cn}

\keywords{Undecidability, definability, diophantine sets, Hilbert's tenth problem, global fields.
\newline \indent 2020 {\it Mathematics Subject Classification}. Primary 03D35, 11S15, 11U05; Secondary 03D25, 11D99.
\newline \indent
$^\star$ Supported by the National Natural Science Foundation of China (grant no. 12371004).}

\begin{abstract} In 2016 J. Koenigsmann  proved that $\mathbb Q\setminus\mathbb Z$ is diophantine over $\mathbb Q$, i.e., there is a polynomial $P(t,x_1,\ldots,x_{n})\in\mathbb Z[t,x_1,\ldots,x_{n}]$
such that for any rational number $t$ we have
$$t\not\in\mathbb Z\iff \exists x_1,\ldots,x_{n}\in\Q\,[P(t,x_1,\ldots,x_{n})=0].$$
In this paper we show that we may take $n=7$ which improves the previous record $n=10$ obtained by Daans in 2024. (Actually we even extend this to any global field.)
This, together with a previous result of Z.-W. Sun, implies that there is no algorithm to decide for any $F(x_1,\ldots,x_{16})\in\mathbb Z[x_1,\ldots,x_{16}]$ whether
$$\forall x_1,\ldots,x_9\in\Q\exists y_1,\ldots,y_{7}\in\Q\,[F(x_1,\ldots,x_9,y_1,\ldots,y_{7})=0].$$
\end{abstract}
\maketitle

\tableofcontents

\section{Introduction}

Hilbert's Tenth Problem (HTP) over a ring $R$ asks for an algorithm to determine for any given polynomial $P(x_1,\ldots,x_n)\in R[x_1,\ldots,x_n]$
whether the diophantine equation $P(x_1,\ldots,x_n)=0$ has solutions $x_1,\ldots,x_n\in R$.
When $R$ is the ring $\Z$ of integers, this was solved negatively by Y. Matiyasevich \cite{M70} in 1970.
The author \cite{S21} proved further that $\exists^{11}$ over $\Z$
is undecidable, i.e., there is no algorithm to determine for any given polynomial $P(x_1,\ldots,x_{11})$
with integer coefficients
whether $P(x_1,\ldots,x_{11})=0$ for some  $x_1,\ldots,x_{11}\in \Z$.

HTP over the field $\Q$ of rational numbers remain open. 
It remains open whether HTP over $\Q$ is undecidable.
However, Robinson \cite{R49} proved that one can characterize
$\Z$ by using the language of $\Q$ and hence the arithmetic theory of $\Q$ is undecidable.
This was improved by B. Poonen \cite{Po} in 2009.
In 2016 J. Koenigsmann \cite{K} proved further that the set $\mathbb Q\setminus\mathbb Z$ is diophantine over $\Q$, i.e., there is a polynomial $P(t,x_1,\ldots,x_{n})\in\Q[t,x_1,\ldots,x_{n}]$
such that for any $t\in\Q$ we have
$$t\not\in\Z\iff \exists x_1,\ldots,x_{n}\in\Q\,[P(t,x_1,\ldots,x_{n})=0]$$
(in this case we say that $\Q\sm\Z$
is diophantine over $\Q$ with $n$ unknowns), i.e.,
$$t\in\Z\iff \forall x_1,\ldots,x_{n}\in\Q\,[P(t,x_1,\ldots,x_{n})\not=0]$$
(in this case we say that $\Z$ is $\forall^n$-definable over $\Q$).
The number $n$ of unknowns in Koenigsmann's diophantine representation of $\Q\sm \Z$ over $\Q$
was reduced down by N. Daans \cite{Daans2021}, G.-R. Zhang and Sun \cite{ZS2022}, and Daans
\cite{Daans2024}. The current record is $n=10$ obtained by Daans \cite{Daans2024} in 2024
who also extended it to global fields.

Let $K$ be a global field which is either a number field or a function
field of one variable over a finite field.  Let $V_K$ denote the set of its
nonarchimedean discrete valuations, normalized to have value group $\Z$.  For
$v\in V_K$, write $K_v$ for the completion, $\cO_v$ for its valuation ring, and
$\m_v$ for its maximal ideal.  If $S_0\subseteq V_K$ is finite, the ring of
$S_0$-integers is
\[
  \cO_{S_0}=\bigcap_{v\in V_K\setminus S_0}\cO_v.
\]

In this paper, we improve Daans' work \cite{Daans2024} by reducing $10$ to $7$.
Namely, we establish the following theorem.

\begin{theorem}\label{thm:main}
Let $K$ be a global field and let $S_0\subseteq V_K$ be finite.  There is a
polynomial
\[
 F_{K,S_0}\in K[X,Y_1,\ldots,Y_7]
\]
such that for every $x\in K$ we have
\[
 x\in\cO_{S_0}
 \quad\Longleftrightarrow\quad
 \forall y_1,\ldots,y_7\in K\ [
 F_{K,S_0}(x,y_1,\ldots,y_7)\ne0].
\]
Equivalently, $\cO_{S_0}$ is $\forall^7$-definable in $K$ in the language of
rings with constants from $K$.
\end{theorem}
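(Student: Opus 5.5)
The plan follows the Koenigsmann--Park--Daans strategy. Describe the complement $K\setminus\cO_{S_0}$ as a union of finitely many sets, each existentially definable with few variables, then merge them into one polynomial with 7 unknowns.

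\textbf{Step 1: quaternionic local rings.} For $a,b\in K^\times$ let $H_{a,b}$ be the quaternion algebra and $\Delta_{a,b}\subseteq V_K$ the finite set of finite places where it ramifies. Write
\[
T_{a,b}=\{\Trd(q): q\in H_{a,b},\ \Nrd(q)=1\},
\]
or rather the Poonen/Koenigsmann variant built from the set of $x$ for which $ax^2+b(\ldots)$ is a norm. One checks that $T_{a,b}+T_{a,b}$ equals
\[
\bigcap_{v\in\Delta_{a,b}}\cO_v .
\]
The key is that its complement is existentially definable with few unknowns. For $x\in K$, the condition $x\notin\cO_v$ for some $v\in\Delta_{a,b}$ should be expressed as: $x^{-1}\in\m_v$ is detected by the solvability of a norm-type form
\[
X_1^2-aX_2^2-bX_3^2+abX_4^2=\text{(expression in }x).
\]
By Hasse--Minkowski this is a 4-variable quadratic condition, locally controlled. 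The work is to show that $x\in K\setminus\bigcap_{v\in\Delta_{a,b}}\cO_v$ iff there exists $(a,b)$ from a parametrized family, together with a witness, with about 3 unknowns for the pair and the witness.

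\textbf{Step 2: covering all of $V_K\setminus S_0$.} Let $x\notin\cO_{S_0}$, with $v(x)<0$ for some $v\notin S_0$. Choose $a,b$ depending on $x$ itself, for instance $a=x$ or $a$ a unit times $x$ and $b$ from finitely many fixed constants. Use quadratic reciprocity / Hilbert reciprocity and Dirichlet density to guarantee that $v\in\Delta_{a,b}$ while $\Delta_{a,b}\cap S_0=\emptyset$. Finitely many choices of $b$ handle residue-class and parity issues, including the dyadic places and characteristic 2 for function fields, where Artin--Schreier quaternion algebras replace $(a,b)$. This yields
\[
K\setminus\cO_{S_0}=\bigcup_{i=1}^m D_i ,
\]
each $D_i$ existential. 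Finite unions are free in unknown count, since we take products of polynomials. Finite intersections over a field also cost nothing extra, by sums of norms from an anisotropic form.

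\textbf{Step 3: counting unknowns.} I would aim to express each $D_i$ as
\[
\exists\, y_1,\ldots,y_7:\ \text{a conjunction of norm equations sharing variables}.
\]
For example, use $\Nrd$ of one quaternion (4 unknowns) together with a parameter and a 2-variable norm from $K(\sqrt{a})$, sharing the trace variable. Then combine the conjunction via an anisotropic form, which adds no new variables.

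The main obstacle is this count. Daans gets 10, and saving 3 requires letting the auxiliary parameter ($a$ or $b$) be a polynomial in $x$ and in the quaternion coordinates, rather than a fresh variable. One must then prove the local--global equivalence still holds for all such parameter values, including at the places of $S_0$ and at the archimedean and dyadic places. The first thing I would try is to force $b=x^{-1}$-type dependence, so that the ramification set automatically contains the poles of $x$, and to use the reciprocity law to exclude the places of $S_0$ by multiplying $a$ by a fixed element with prescribed local behaviour at $S_0$.
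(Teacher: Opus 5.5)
Your proposal has a genuine gap: it contains no mechanism that reaches seven unknowns, and you concede as much (``the main obstacle is this count'', ``the first thing I would try''). The ingredients you list do not add up.
\begin{itemize}
\item The Poonen-type set $T_{a,b}+T_{a,b}$ needs two independent norm-one quaternions. That is seven witnesses after the trace relation eliminates one coordinate, before you pay for $a,b$ and for the conditions restricting them.
\item Your rule that finite intersections ``cost nothing extra'' is false in the sense you need. An anisotropic form merges several equations in the \emph{same} witness tuple. For a conjunction of an $\exists^{m_1}$ and an $\exists^{m_2}$ condition with independent witnesses, one only has the bound $m_1+m_2-1$ (Theorem~\ref{thm:DDF}). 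This is exactly the accounting $2+(3+3-1)=7$ in the paper.
\item Step~1 never says which norm form detects $x^{-1}\in\m_v$ for some $v\in\Delta_{a,b}$. The equivalence you state mixes a set depending on $(a,b)$ with a quantifier over $(a,b)$.
\item Non-membership in $T_{a,b}+T_{a,b}$ is universal in the witnesses, so $\exists(a,b)\,[x\notin T_{a,b}+T_{a,b}]$ is not existential.
\end{itemize}
The existential route (Koenigsmann, Daans) is a \emph{positive} membership test under $\exists(a,b)$. By \eqref{eq:Daans-bridge}, $z\in\bigcup_{w\notin S}\m_w$ iff some $(a,b)$ in the $\exists^3$-definable family $\Phi_u^S$ has $h(a,b,z)\in\bigcap_{v\in\Delta(Q_{a,b})}\cO_v$. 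Here $S\supseteq S_0$ has odd cardinality, which reciprocity forces since $\Delta(Q_{a,b})=S\cup\{w\}$ must be even, and $h$ is automatically integral on $S$. Your requirement $\Delta_{a,b}\cap S_0=\emptyset$, with $a$ tied to $x$ and $b$ from finitely many constants, leaves the parity-forced auxiliary ramified places uncontrolled.

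The missing idea is how to test that membership with three witnesses. The paper takes one quaternion $\alpha=z+wj$ and the second element $\alpha\gamma_\tau$. Here $\gamma_\tau=(1+\tau i)/(1-\tau i)$ is a norm-one element of $K(i)\subset Q_{a,b}$, and $\tau$ is a \emph{constant} from a finite set $\Lambda$. Then $\Nrd(\alpha\gamma_\tau)=\Nrd(\alpha)$ is automatic. The condition $\Trd(\alpha)+\Trd(\alpha\gamma_\tau)=c$ is linear and eliminates a coordinate, leaving one quadric in $(y,r,s)$. Soundness is Lemma~\ref{lem:integral-trace} at the ramified places.

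The real work is completeness with $\tau$ frozen:
\begin{itemize}
\item At each $v\in S$, compactness of $\mathcal P_v$ plus parametrized Hensel yields finitely many admissible $\tau$, and weak approximation globalizes them.
\item At the new place $w$, the residue of $a$ must be chosen compatibly with the frozen $\tau$. This uses a Weil-bound count in odd characteristic and a differential argument in characteristic $2$.
\item $b$ is adjusted by a global norm so that $\Delta(Q_{a,b})=S\cup\{w\}$.
\item Local points are checked at every place, and Hasse--Minkowski for the fixed quaternary quadric gives a global point.
\item The finitely many exceptional places are added back as a finite union of $\exists^3$-definable maximal ideals.
\end{itemize}
Making $a$ or $b$ a polynomial in $x$ and the quaternion coordinates, as you suggest, is not where the saving comes from. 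You also give no argument that any local--global equivalence survives such a substitution.
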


For $K=\Q$ and $S_0=\varnothing$, multiplying by a common denominator gives
$F_{\Q,\varnothing}\in\Z[X,Y_1,\ldots,Y_7]$.  Hence Theorem \ref{thm:main} contains the
special conclusion that $\Z$ is $\forall^7$-definable in $\Q$.
Combining this with \cite[Theorem 1.1]{Sun}, we obtain the undecidablity
of $\forall^9\exists^7$ over $\Q$ is undecidable, in the spirit of the proof of
\cite[Theorem 1.3]{ZS2022}.

We shall use some quantitative logical facts from Daans' paper
\cite{Daans2024}.

\begin{theorem}[Daans--Dittmann--Fehm]\label{thm:DDF}
Let $K$ be finitely generated over a perfect subfield.  Suppose that
$D_1,D_2\subseteq K^n$ are existentially definable with $m_1$ and $m_2$
quantified variables, where $m_1,m_2\ge1$.  Then $D_1\cap D_2$ is
existentially definable with $m_1+m_2-1$ quantified variables.
\end{theorem}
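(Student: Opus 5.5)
The statement just given is Theorem~\ref{thm:DDF}, which the paper imports from Daans--Dittmann--Fehm. The natural approach is to merge the two defining systems into a single equation while sharing one quantified variable.

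Write
\[
D_1=\{a\in K^n:\exists y_1,\dots,y_{m_1}\ [f(a,y)=0]\},\qquad
D_2=\{a\in K^n:\exists z_1,\dots,z_{m_2}\ [g(a,z)=0]\}.
\]
The naive conjunction $f=0\wedge g=0$ can be expressed as one equation. Since $K$ is not algebraically closed, there is an anisotropic form in two variables over $K$, for instance a norm form $N(u,v)$ of a quadratic (or, in characteristic $2$, Artin--Schreier) extension. Then $N(f,g)=0$ holds if and only if $f=g=0$. This already gives $m_1+m_2$ quantified variables. The content of the theorem is saving one more.

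To save a variable, I would eliminate the last variable of the second system by a ``parameter trick''. The set $\{(a,z_1,\dots,z_{m_2-1})\}$ is the projection of a hypersurface along $z_{m_2}$. I would replace $z_{m_2}$ by a rational expression in $y_{m_1}$ and a new quantity, using the following device. Any existential formula with $m$ variables over such $K$ can be rewritten so that the last variable ranges over a set having the form of a value set of a polynomial. I expect the finitely-generated-over-perfect hypothesis to enter here, through the existence of a separating transcendence basis and of nontrivial field extensions of every degree. This is what makes it possible to encode a pair $(y_{m_1},z_{m_2})$ by a single element $w$ together with the remaining variables.

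Concretely, I would aim to show that $\exists y\,\exists z\,[f(a,\dots,y)=0\wedge g(a,\dots,z)=0]$ is equivalent to $\exists w$ plus the other variables satisfying one polynomial condition. The plan is to take $w$ to be an element of a suitable finite extension $L/K$ and express $y,z$ as its coordinates. The condition $f=0\wedge g=0$ then becomes the vanishing of a norm from $L$. The difficulty is that coordinates of $w$ are themselves extra variables. The step I expect to be the main obstacle is therefore to arrange that the extra coordinates are absorbed into the already-existing variables $y_1,\dots,y_{m_1-1},z_1,\dots,z_{m_2-1}$, again via norm forms.

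If that fails, I would try the alternative of substituting $z_{m_2}=y_{m_1}$ after a change of variables that makes both polynomials monic in the final variable. I would then use resultants to force a common root. I am unsure this preserves equivalence, and I would check that point first.
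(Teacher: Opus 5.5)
\paragraph{There is a genuine gap.} The paper gives no proof of this statement. It imports it from Daans--Dittmann--Fehm, where it comes out of a comparison between the number of quantifiers and an essential-dimension invariant of the family of witnesses. Roughly: that invariant is at most $m_i-1$ for a definition with $m_i\ge1$ unknowns, it behaves subadditively under intersection, and for fields as in the hypothesis the quantifier count exceeds it by at most one.

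Your bound $m_1+m_2$ is fine. But the only step with content, saving one unknown, is never carried out, and both mechanisms you suggest fail.
\begin{itemize}
\item An element $w\in L$ with $[L:K]=e\ge2$ is $e$ unknowns over $K$. Norm forms merge \emph{equations}, not \emph{unknowns}. So ``absorbing the extra coordinates into the existing variables'' is just a restatement of the theorem. The claim that the last variable can be made to range over a value set is unsupported.
\item The hypothesis cannot be what supplies field extensions of every degree. An algebraically closed field is perfect and satisfies it; there the statement is easy, but your anisotropic-form reduction is unavailable.
\item Identifying the last witnesses, say $z=\mu y$ with $\mu\in K^\times$, and taking a resultant changes the defined set, because it demands witnesses sharing a coordinate. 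For $n=m_1=m_2=1$, $f=y^2-a$, $g=z^3-a$, the set $D_1\cap D_2$ consists of all sixth powers. By contrast, $\exists y\,[y^2=a\wedge\mu^3y^3=a]$ only allows $a\in\{0,\mu^{-6}\}$.
\end{itemize}

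\paragraph{The missing idea.} The saving is a dimension phenomenon, and it must be realised by a \emph{primitive element}, not by identifying variables. Your instinct to merge only the last witnesses is sound: treating $y_1,\dots,y_{m_1-1},z_1,\dots,z_{m_2-1}$ as extra free variables reduces everything to $m_1=m_2=1$, at a cost of $m_1+m_2-2$ quantifiers.

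For $m_1=m_2=1$, proceed as follows.
\begin{itemize}
\item On the closed locus where $f(a,\cdot)$ (resp.\ $g(a,\cdot)$) vanishes identically, the definition of $D_2$ (resp.\ $D_1$) alone suffices. Off it, the witness pairs over $a$ form a finite set.
\item Replace $(y,z)$ by the single unknown $t=y+\lambda z$, eliminating via $\operatorname{Res}_y(f(a,y),g(a,\lambda^{-1}(t-y)))$.
\item Every $K$-witness pair gives a $K$-root $t$. The converse, which your plan never confronts, needs two things. First, $\lambda$ must separate the geometric witness pairs over $a$. Second, those witnesses must be separable over $K$. Then Galois invariance of the unique pair above $t$ forces $y,z\in K$.
\item For each $a$ only boundedly many $\lambda$ are bad, so a finite disjunction over fixed $\lambda_1,\dots,\lambda_N$ suffices.
\item This works provided the condition ``$\lambda_k$ is good at $a$'' is absorbed into the same unknown. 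On each piece of a finite stratification, after passing to square-free parts, that condition is an inequation $\gamma_k(a)\ne0$.
\item To absorb it, rescale $t$ so that the eliminant is monic in $t$ with all other coefficients divisible by $\gamma_k(a)$. Then substitute $t-1/\gamma_k(a)$ and clear denominators. The result is the constant $\pm1$ wherever $\gamma_k(a)=0$.
\end{itemize}
This settles characteristic $0$. In characteristic $p$, inseparable witnesses break the descent. For example, take $y^p=a_1$, $z^p=a_2$ with $a_1=-\lambda^pa_2\notin K^p$: then $t=0$ is a rational root. It is for such phenomena that the hypothesis on $K$ is needed; note that it is vacuous in characteristic $0$.
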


This is \cite[Theorem~1.4]{DDF}, quoted as \cite[Theorem~2.5]{Daans2024}.
Every global field satisfies the hypothesis.  We also use
\cite[Propositions~2.2 and Corollary 2.4]{Daans2024}: rational-function terms can be
eliminated without adding quantified variables, and over a non-algebraically
closed field an existential formula with $m\ge1$ variables can be represented
by one polynomial equation with the same $m$ variables.

For a nonempty set $V$ of valuations,
\begin{equation}\label{eq:duality}
 \bigcap_{v\in V}\cO_v
 =\left(K\setminus\left(\bigcup_{v\in V}\m_v\right)^{-1}\right)\cup\{0\}.
\end{equation}
Thus an $\exists^m$-definition of $\bigcup_{v\in V}\m_v$ gives a
$\forall^m$-definition of $\bigcap_{v\in V}\cO_v$; this is
\cite[Lemma~5.1]{Daans2024}.

Let $K$ be any global field. For
$c\in K^\times$, put
\[
 \operatorname{Odd}(c)=\{v\in V_K:v(c)\text{ is odd}\}.
\]
Starting with a finite $S_0\subseteq V_K$ in Theorem \ref{thm:main}, Daans (\cite[Lemmas~6.5--6.7]{Daans2021} and
\cite[Theorem~5.6]{Daans2024}) enlarged it to a nonempty
finite set $S$ satisfying the following (i)-(iii). 

(i) $S$ contains every dyadic valuation if $\ch(K)\ne2$.

 (ii) $S=\operatorname{Odd}(\pi)$ for some $\pi\in K^\times$, and $|S|$ is odd.

 (iii) For some $u\in\bigcap_{v\in S}\cO_v^\times$, the reduction of
 $X^2-X-u^2$ is irreducible at every $v\in S$.
 
In later sections, $\pi$ and $S$ will be as given above.

\section{Reduced traces and fixed-parameter quaternion formulas}
\setcounter{equation}{0}

Let $Q$ be a quaternion algebra over a field $K$. Since $Q$ is a central
simple $K$-algebra of degree $2$, each $\alpha\in Q$ satisfies
a quadratic equation
\[
X^2-\Trd(\alpha)X+\Nrd(\alpha)=0.
\]
This quadratic polynomial is called the \emph{reduced characteristic polynomial} of
$\alpha$, and its two coefficients are called the \emph{reduced trace}
$\Trd(\alpha)$ and the \emph{reduced norm} $\Nrd(\alpha)$.
Equivalently,
\[
\alpha^2-\Trd(\alpha)\alpha+\Nrd(\alpha)=0.
\]
This is the quaternion-algebra analogue of the Cayley--Hamilton theorem for matrices of order two.
If $Q$ is split over $K$, then
$Q\simeq M_2(K)$.
Under any such an isomorphism, if $\alpha$ corresponds to a matrix
$M_\alpha$, then
\[
\Trd(\alpha)=\operatorname{tr}(M_\alpha)
\ \ \t{and}\ \ 
\Nrd(\alpha)=\det(M_\alpha).
\]
Thus reduced trace and reduced norm are precisely the intrinsic quaternionic
analogues of matrix trace and determinant.

For a quaternion algebra $Q$ over a global field $K$, let
\[
 \Delta(Q)=\{v\in V_K: Q\otimes_K K_v\text{ is a division algebra}\}.
\]
For $v\in \Delta(Q)$, $Q_v=Q\otimes_K K_v$ has a unique
maximal order $\mathcal O_{Q_v}$.

If $\Nrd(\alpha)=1$, then $\alpha$ is a unit of $\mathcal O_{Q_v}$. In particular,
$\Trd(\alpha)\in\mathcal O_v$.
Consequently, if $\alpha,\beta\in Q_v$ and
$\Nrd(\alpha)=\Nrd(\beta)=1$, then 
$$c=\Trd(\alpha)+\Trd(\beta)\in\mathcal O_v.$$

This is the basic mechanism behind the three-variable quaternion relation used
in the proposed improvement from ten to seven universal quantifiers. The
formula constructs two reduced-norm-one quaternion elements whose reduced
traces add to the given element $c$. At every ramified place, the trace
integrality of norm-one elements forces $c$ to lie in the corresponding
valuation ring.

Call $Q$ \emph{nonreal} if it is split at every real embedding of $K$.
Daans \cite{Daans2024} used the characteristic-independent presentation
\[
 [A,B)_K=K\oplus Ku\oplus Kv\oplus Kuv,
 \qquad
 u^2-u=A,\quad v^2=B,\quad uv+vu=v.
\]
If $\ch(K)\ne2$, then
\[
 [A,B)_K\simeq(1+4A,B)_K,
\]
where $(C,B)_K$ is generated by $i$ and $j$ with $i^2=C$, $j^2=B$ and
$ij=-ji$.

We use the following standard observation.

\begin{lemma}\label{lem:integral-trace}
Let $F$ be a nonarchimedean local field and let $D/F$ be a quaternion division
algebra.  If $\alpha\in D$ and $\Nrd(\alpha)=1$, then
$\Trd(\alpha)\in\cO_F$.
\end{lemma}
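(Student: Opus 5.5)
The plan is to use the standard valuation on a local division algebra. Let $D$ be a quaternion division algebra over the nonarchimedean local field $F$, with normalized valuation $v$ on $F$. The key fact is that
\[
 w(\alpha)=\tfrac12\, v(\Nrd(\alpha))
\]
defines a valuation on $D$ extending $v$. Its valuation ring $\cO_D=\{\alpha\in D: w(\alpha)\ge 0\}=\{\alpha: \Nrd(\alpha)\in\cO_F\}$ is the unique maximal order of $D$, and this is exactly the order $\mathcal O_{Q_v}$ mentioned before the statement. I would take this as the standard input: the ultrametric inequality for $w$ is the only nontrivial part, and it is classical (e.g.\ Reiner's \emph{Maximal Orders}, or Vign\'eras).

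If one wants to avoid quoting that, here is an elementary route. For $\alpha\in D\setminus F$, the subalgebra $L=F(\alpha)$ is a quadratic field extension of $F$, because $D$ is a division algebra. The reduced characteristic polynomial of $\alpha$ is then the minimal polynomial of $\alpha$ over $F$, so
\[
 \Trd(\alpha)=\Tr_{L/F}(\alpha),\qquad \Nrd(\alpha)=\N_{L/F}(\alpha).
\]
For $\alpha\in F$ these become $\Trd(\alpha)=2\alpha$ and $\Nrd(\alpha)=\alpha^2$, so that case is immediate: $\alpha^2=1$ forces $\alpha=\pm1$ and $\Trd(\alpha)=\pm2\in\cO_F$.

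The main step is the case $\alpha\notin F$ with $\N_{L/F}(\alpha)=1$. The valuation $v$ extends uniquely to $L$, since $F$ is complete; call the extension $v_L$. It satisfies $v_L(x)=\tfrac12 v(\N_{L/F}(x))$, so $v_L(\alpha)=0$. Moreover $v_L(\sigma\alpha)=v_L(\alpha)$ for the nontrivial automorphism $\sigma$ when $L/F$ is Galois, because the extension is unique. Hence $\alpha$ and its conjugate both lie in $\cO_L$. In the inseparable case (characteristic $2$) the trace is $0$ anyway.

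It follows that $\Tr_{L/F}(\alpha)=\alpha+\sigma\alpha\in\cO_L\cap F=\cO_F$. Equivalently, $\alpha$ is integral over $\cO_F$, and a monic minimal polynomial of an integral element over the integrally closed ring $\cO_F$ has coefficients in $\cO_F$.

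The only point I expect to need care is this uniqueness of extension of valuations in complete fields. That is standard (Hensel's lemma), so there is no real obstacle. Note that the division hypothesis is used only to guarantee that $F(\alpha)$ is a field. In the split case $M_2(F)$ the conclusion fails: for example, $\operatorname{diag}(t,t^{-1})$ has norm $1$ and trace $t+t^{-1}$, which need not lie in $\cO_F$.
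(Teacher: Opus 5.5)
Your proposal is correct, and your first paragraph is exactly the paper's argument. The paper uses $v_D(\alpha)=\frac12 v_F(\Nrd(\alpha))=0$ to place $\alpha$ in the unit group of the unique maximal order of $D$. It then quotes integrality of elements of a maximal order over $\cO_F$ to put the coefficients of $X^2-\Trd(\alpha)X+\Nrd(\alpha)$ in $\cO_F$.

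Your ``elementary route'' is a genuinely different argument. Instead of the valuation on the noncommutative algebra $D$ and the theory of maximal orders, you pass to the commutative subfield $L=F(\alpha)$. There the reduced characteristic polynomial is the minimal polynomial, so $\Trd(\alpha),\Nrd(\alpha)$ coincide with $\Tr_{L/F}(\alpha),\N_{L/F}(\alpha)$. You then use only that the valuation of the complete field $F$ extends uniquely to $L$ (equivalently, that $\cO_L$ is the integral closure of $\cO_F$ in $L$). Every step checks out, including the scalar case $\alpha=\pm1$ and the inseparable case in characteristic $2$.

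Your route is self-contained, using only commutative local field theory. It also pinpoints where the division hypothesis enters: it is what makes $F[\alpha]$ a field rather than an algebra like $F\times F$, which is exactly what your $\operatorname{diag}(t,t^{-1})$ example exploits. In fact it is the commutative core of the facts the paper cites, since the ultrametric inequality for $v_D$ and the integrality of its valuation ring are standardly proved by restricting to $F(\alpha)$ in this way. The paper's version simply gains brevity by quoting Reiner.
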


\begin{proof}
The unique extension of the valuation of $F$ to $D$ satisfies
$v_D(\alpha)=\frac12v_F(\Nrd(\alpha))=0$.  Hence $\alpha$ belongs to the unit
group of the unique maximal order of $D$.  Elements of a maximal order are
integral over $\cO_F$, so the coefficients of the reduced characteristic
polynomial
\[
 X^2-\Trd(\alpha)X+\Nrd(\alpha)
\]
lie in $\cO_F$.  In particular, $\Trd(\alpha)\in\cO_F$.
\end{proof}

\begin{remark} For valuations and maximal orders in local division
algebras, see, e.g., \cite[Sections~12 and 13]{Reiner} or
\cite[Section~17]{Pierce}.
\end{remark}

\begin{lemma}[Local trace-sum lemma]\label{lem:local-trace}
Let $F$ be a nonarchimedean local field, let $L/F$ be the unramified quadratic
extension, and let $c\in\cO_F$.  Then there exist $z_1,z_2\in\cO_L^\times$
such that
\[
 \N_{L/F}(z_1)=\N_{L/F}(z_2)=1\ \ \t{and}\ \
 \Tr_{L/F}(z_1)+\Tr_{L/F}(z_2)=c.
\]
If $\operatorname{char}F\ne2$, they may be chosen so that
\[
 z_2\ne-z_1\ \ \t{and}\ \
 z_2\ne z_1^{-1}.
\]
\end{lemma}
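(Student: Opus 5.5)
The plan is to reduce everything to the residue field $k=\cO_F/\m_F$, with $|k|=q$, and its quadratic extension $l=\cO_L/\m_L$. Put
\[
T=\{\Tr_{L/F}(z): z\in L,\ \N_{L/F}(z)=1\}.
\]
It suffices to write $c=t_1+t_2$ with $t_1,t_2\in T$. Indeed, every $z$ with $\N_{L/F}(z)=1$ is automatically in $\cO_L^\times$, since $v_L(z)=\tfrac12 v_F(\N z)=0$ for $L/F$ unramified. Also, $t\in T$ exactly when $X^2-tX+1$ has a root in $L$, and then the root $z$ satisfies $z\,\sigma(z)=1$ and $z+\sigma(z)=t$.

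\emph{First source of traces.} Let $A\subseteq k$ be the set of $\bar t$ for which $X^2-\bar tX+1$ is irreducible over $k$. If $t\in\cO_F$ reduces into $A$, then $X^2-tX+1$ is irreducible over $F$ with unramified splitting field. Since $L$ is the unique unramified quadratic extension, its roots lie in $L$, so $t\in T$. The elements of $A$ correspond two-to-one to the norm-one elements of $l^\times$ not lying in $k$. There are $q+1$ norm-one elements, and those in $k$ are the roots of $x^2=1$. Hence
\[
|A|=\frac{q-1}{2}\ (q\text{ odd}),\qquad |A|=\frac q2\ (q\text{ even}).
\]

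\emph{Second source of traces.} Put $A'=A\cup\{\bar 2,-\bar 2\}$. Since $\pm1$ have norm one, $\pm2\in T$. More is true for $q$ odd: if $t=2+\varepsilon$ with $v(\varepsilon)\ge1$, the discriminant is $t^2-4=\varepsilon(4+\varepsilon)$, which is $\varepsilon$ times a unit. Every unit of $F$ is a square in $L$, so this discriminant is a square in $L$ whenever $v_F(\varepsilon)$ is even. Hence $T$ contains every $t$ with $v(t\mp2)$ even, and in particular $t=\pm2+\pi^{2m}w$ for all $m\ge1$ and all units $w$. In all cases $2|A'|>q$, so $A'\cap(\bar c-A')\neq\varnothing$. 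This gives $\bar c=a+b$ with $a,b\in A'$.

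\emph{Lifting.} If $a\in A$, choose $t_1$ to be any lift of $a$ and set $t_2=c-t_1$. When $b\in A$, any lift works. When $b=\pm\bar2$, choose $t_1$ within its residue class so that $t_2=c-t_1$ is either exactly $\pm2$ or equal to $\pm2+\pi^{2m}w$; this is possible because only the residue of $t_1$ is constrained. The remaining case is $a,b\in\{\pm\bar2\}$, i.e.\ $\bar c\in\{0,\pm\bar4\}$. Here I take $t_1=\pm2+\pi^{2m}w$ with $m$ large, so that $t_2=c-t_1$ also has $v(t_2\mp2)$ even; adjusting $m$ by one if necessary fixes the parity. In characteristic $2$ I would redo this step with the Artin--Schreier criterion in place of the discriminant. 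I expect this degenerate-residue case, especially for residue characteristic $2$, to be the main obstacle, and I would first try to avoid it by checking whether $A$ alone already suffices there.

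\emph{The extra conditions.} When $\ch F\ne2$ we need $z_2\neq-z_1$ and $z_2\neq z_1^{-1}=\sigma(z_1)$. The condition $z_2=-z_1$ forces $t_2=-t_1$, and $z_2=\sigma(z_1)$ forces $t_2=t_1$. Every construction above leaves infinitely many admissible choices of $t_1$ (varying the lift or $m$), while each bad event pins $t_1$ to at most one value ($c=0$, or $t_1=c/2$). We can also replace $z_2$ by $\sigma(z_2)$ without changing its trace. So a suitable choice avoids both conditions, with a short check in the special case $c=0$.
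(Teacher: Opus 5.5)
Your residue-field count and your lifting of a trace whose residue lies in $A$ are fine, but the second source of traces rests on a false criterion. It is not true that $t\in T$ whenever $X^2-tX+1$ has a root in $L$. If that polynomial splits over $F$, its roots $z,z^{-1}$ lie in $F$, so $\sigma(z)=z$ and $\N_{L/F}(z)=z^2\ne1$ unless $z=\pm1$. Thus for $t\ne\pm2$, membership $t\in T$ requires $X^2-tX+1$ to be irreducible over $F$.

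Consequently, for odd $q$, your claim that $T$ contains every $t$ with $v(t\mp2)$ even is false. For $t=2+\pi^{2m}w$ with $w$ a square unit, $t^2-4=\pi^{2m}w(4+\pi^{2m}w)$ is a square in $F$, so $t\notin T$. Concretely, $11\notin T$ for $F=\mathbb{Q}_3$, $L=\mathbb{Q}_3(\sqrt{-1})$. Restricting to non-square $w$ does not save the degenerate case either. For $c=4$ and $q\equiv3\pmod 4$, your recipe $t_1=2+\pi^{2m}w$, $t_2=2-\pi^{2m}w$ would need both $w$ and $-w$ to be non-squares, which is impossible.

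So your treatment of the case $a,b\in\{\pm\bar2\}$ fails. In residue characteristic $2$ (in particular mixed characteristic such as $F=\mathbb{Q}_2$) you give no argument at all. The ``infinitely many choices of $t_1$ by varying $m$'' in your last paragraph inherits the same error.

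The missing idea is to count asymmetrically, as the paper does with $U\cap(\bar c-V)\ne\varnothing$. Since $|A|+|A'|=q+1>q$, you can always write $\bar c=a+b$ with $a\in A$ and $b\in A'$, so the degenerate case never occurs:
\begin{itemize}
\item if $b\in A$, take any lift $t_1$ of $a$ and set $t_2=c-t_1$;
\item if $b=\pm\bar2$, take $t_2=\pm2$ and $t_1=c\mp2$.
\end{itemize}
Because $\bar t_1\in A$, the root $z_1$ is not in $F$, hence $z_1\ne z_1^{-1}$. Your swap $z_2\mapsto\sigma(z_2)$ then settles both exclusions at once: if $z_2=-z_1$ then $\sigma(z_2)=-z_1^{-1}$, and if $z_2=z_1^{-1}$ then $\sigma(z_2)=z_1$. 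No infinite family of choices is needed.

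With this repair your route is a valid variant of the paper's proof, and slightly more elementary. You lift traces via irreducibility of $X^2-tX+1$, whereas the paper lifts $\bar z_2$ through the norm on principal units and Hensel-lifts $\bar z_1$.
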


\begin{proof} Let $k$ be the residue field, let $|k|=q$, and let $k_2/k$ be its quadratic
extension.  Put
\[
 T=\ker\bigl(N_{k_2/k}:k_2^\times\to k^\times\bigr).
\]
Then $|T|=q+1$, where $q=|k|$. For $z\in T$, we have $z^q=z^{-1}$ and 
 $\Tr_{k_2/k}(z)=z+z^{-1}$. 
Trace fibers are therefore inverse pairs.  If $q$ is odd, inversion has the two
fixed points $\pm1$; if $q$ is even, it has the single fixed point $1$.  Let
$V$ be the set of traces of all elements of $T$, and let $U$ be the traces of
the elements not fixed by inversion.  Then
\[
 (|U|,|V|)=
 \begin{cases}
 ((q-1)/2,(q+3)/2)&\t{if}\ 2\nmid q,\\
 (q/2,q/2+1)&\t{if}\ 2\mid q.
 \end{cases}
\]
For every $\bar c\in k$, we have
\[
 |U|+|\bar c-V|=q+1>q
\]
and hence $U\cap(\bar c-V)\not=\emptyset$.  Choose
$\bar z_1,\bar z_2\in T$ with $\bar z_1$ not
fixed by inversion and $\Tr(\bar z_1)+\Tr(\bar z_2)=\bar c$.

Choose an arbitrary unit lift $z_2'\in\cO_L^\times$ of $\bar z_2$.  Its norm
is congruent to $1$ modulo $\m_F$.  The norm map on principal units
$1+\m_L\to1+\m_F$ is surjective for an unramified extension, so one can
multiply $z_2'$ by an element of $1+\m_L$ and obtain a lift
$z_2\in\cO_L^\times$ of $\bar z_2$ with $N_{L/F}(z_2)=1$. For
$c_1=c-\Tr_{L/F}(z_2)$,
the reduction of $X^2-c_1X+1$ has the two distinct roots
$\bar z_1$ and $\bar z_1^{-1}$ in $k_2$.  Hensel lifting in $L$ gives a root
$z_1\in\cO_L^\times$.  Its conjugate is $z_1^{-1}$, so it has norm $1$ and
trace $c_1$.

Suppose now that $\operatorname{char}F\ne2$.  By the construction, $z_1\not=z_1^{-1}$ and hence
$z_1\ne\pm1$.  If $z_2=-z_1$, replace $z_2$ by $-z_1^{-1}$.  If
$z_2=z_1^{-1}$, replace $z_2$ by $z_1$.  In both cases, norm one is preserved, and  the trace sum is
unchanged because $\Tr(z_1)=\Tr(z_1^{-1})$. The resulting pair satisfies both required inequalities.
\end{proof}

\begin{remark} For the standard facts about norm groups in unramified local extensions, see
\cite[Chapter~V, \S2]{SerreLocalFields}.
\end{remark}

\subsection{The case $\ch(K) \not=2$}
\setcounter{equation}{0}

Throughout this subsection, let $\ch(K)\ne2$.

Put $ A=1+4a^2$ and $B=b\pi$, and let
\[
Q=(A,B)_K
=
K\langle i,j:i^2=A,\ j^2=B,\ ij=-ji\rangle.
\]

For a fixed constant $\tau\in K$, set
$ \delta_\tau=1-A\tau^2$. For $c\in K$, let
 $\Psi^{\mathrm{odd}}_\tau(a,b,c)$ denote the formula
\begin{equation}\label{eq:Psi-fixed-odd}
 \exists y,r,s\in K\ [ \delta_\tau\ne0\ \land\ 
 \delta_\tau(c^2-Ay^2)-16B(r^2-As^2)=16].
\end{equation}
This uses exactly three quantified variables.

Put $E=K(i)$ with $i^2=A$, and set conjugation $i\mapsto-i$.
For
\[
 \gamma_\tau=\frac{1+\tau i}{1-\tau i},
\]
clearly $N_{E/K}(\gamma_\tau)=1$.  For a solution of
\eqref{eq:Psi-fixed-odd}, set
\[
 z=\frac{c-A\tau y}{4}+\frac{y-\tau c}{4}i,
 \ \ w=r+si
 \ \ \t{and}\ \ \alpha=z+wj.
\]
A direct calculation gives
\begin{gather}
 16N_{E/K}(z)=\delta_\tau(c^2-Ay^2),\label{eq:norm-z-fixed-odd}\\
 16\Nrd(\alpha)=\delta_\tau(c^2-Ay^2)-16B(r^2-As^2),\label{eq:nrd-fixed-odd}\\
 \Trd(\alpha)+\Trd(\alpha\gamma_\tau)=c.\label{eq:trace-fixed-odd}
\end{gather}
Indeed, $1+\gamma_{\tau}=2(1+\tau i)/\delta_{\tau}$, and the scalar coefficient of
$z(1+\gamma_{\tau})$ is $c/2$.  
Thus \eqref{eq:Psi-fixed-odd} produces two norm-one elements, namely $\alpha$ and
$\alpha\gamma_\tau$, whose reduced traces sum to $c$.

\begin{proposition}\label{prop:fixed-inclusion-odd}
Suppose that $\Psi^{\mathrm{odd}}_\tau(a,b,c)$ holds with $a,b,c,\tau\in K$.
Then 
\[ c\in\bigcap_{v\in\Delta(Q_{a,b})}\cO_v.
\]
\end{proposition}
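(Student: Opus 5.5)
Fix a solution $y,r,s$ of \eqref{eq:Psi-fixed-odd} and a place $v\in\Delta(Q_{a,b})$, where $Q_{a,b}$ denotes the algebra $Q=(A,B)_K$. We must show $v(c)\ge0$. The plan is to work in the global algebra $Q$, build the explicit elements $\alpha$ and $\alpha\gamma_\tau$ from the displayed construction, check that both have reduced norm $1$, and then pass to the completion $Q_v$, where Lemma \ref{lem:integral-trace} applies.

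\emph{Global construction.} Set $z$, $w$, $\alpha=z+wj$ as defined before the statement. The hypothesis includes $\delta_\tau\ne0$, so $1-\tau i$ is invertible in $E$, because its norm $1-A\tau^2=\delta_\tau$ is nonzero. Hence $\gamma_\tau\in E\subseteq Q$ is well defined, and $N_{E/K}(\gamma_\tau)=1$. Identity \eqref{eq:nrd-fixed-odd} together with the equation in \eqref{eq:Psi-fixed-odd} gives $16\Nrd(\alpha)=16$, so $\Nrd(\alpha)=1$. Since the reduced norm is multiplicative and restricts to $N_{E/K}$ on $E$, we get $\Nrd(\alpha\gamma_\tau)=\Nrd(\alpha)N_{E/K}(\gamma_\tau)=1$. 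By \eqref{eq:trace-fixed-odd}, $\Trd(\alpha)+\Trd(\alpha\gamma_\tau)=c$.

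The main computational step is verifying \eqref{eq:nrd-fixed-odd} and \eqref{eq:trace-fixed-odd}; the paper asserts them as a direct calculation, and I would take them as given. For the norm, write $\Nrd(z+wj)=N(z)-B\,N(w)$ using $j\zeta=\bar\zeta j$ and $j^2=B$, and check $16N(z)=\delta_\tau(c^2-Ay^2)$ by expanding. For the trace, write $\Trd(\alpha)+\Trd(\alpha\gamma_\tau)=\Trd(\alpha(1+\gamma_\tau))$. The $wj$-part contributes zero trace, since $wj\,e$ for $e\in E$ lies in $Ej$, which is trace-free. So the sum equals $\Tr_{E/K}(z(1+\gamma_\tau))=2\cdot(\text{scalar part})=c$.

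\emph{Local step.} Tensor with $K_v$. Because $v\in\Delta(Q_{a,b})$, $Q_v$ is a division algebra over the nonarchimedean local field $K_v$. Reduced norm and reduced trace are compatible with scalar extension, so $\alpha$ and $\alpha\gamma_\tau$ are elements of $Q_v$ of reduced norm $1$. Lemma \ref{lem:integral-trace} then gives $\Trd(\alpha),\Trd(\alpha\gamma_\tau)\in\cO_v$, and therefore $c\in\cO_v$. As $v\in\Delta(Q_{a,b})$ was arbitrary, the inclusion follows.

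If $\Delta(Q_{a,b})$ is empty, the intersection is over the empty set; under the paper's conventions this is read as $K$, so the statement holds trivially in that case. The only delicate points are the invertibility of $1-\tau i$, which the condition $\delta_\tau\ne0$ is there to guarantee, and the trace identity. I expect no real obstacle beyond that bookkeeping.
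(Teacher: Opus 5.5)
Your proposal is correct and follows the same route as the paper, which simply applies Lemma~\ref{lem:integral-trace} at each $v\in\Delta(Q_{a,b})$ to the two reduced-norm-one elements $\alpha$ and $\alpha\gamma_\tau$, whose reduced traces sum to $c$. Your extra bookkeeping is accurate: $1-\tau i$ is invertible because $\delta_\tau\ne0$, $\Nrd(\alpha\gamma_\tau)=\Nrd(\alpha)N_{E/K}(\gamma_\tau)=1$, $Ej$ is trace-free, the constructions are compatible with completion, and the empty case is trivial. The identities \eqref{eq:nrd-fixed-odd} and \eqref{eq:trace-fixed-odd} that you take as given do check out by direct expansion.
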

\begin{proof}
At every $v\in\Delta(Q_{a,b})$, apply Lemma \ref{lem:integral-trace} to the two
norm-one elements $\alpha$ and
$\alpha\gamma_\tau$ in \eqref{eq:trace-fixed-odd}.
\end{proof}

\subsection{The case $\ch(K) =2$}

Assume $\ch(K)=2$.  Put
$ A=a^2$ and $B=b\pi$, and write $D=[A,B)_K$.  Thus $D$ is generated by $e$ and $j$ with
\[
 e^2+e=A,\ \ j^2=B\ \ \t{and}\ \ ej+je=j.
\]
Let $E=K(e)$.  For $z=x+ye$, clearly
\begin{equation}\label{eq:trace-norm-char2}
 \Tr_{E/K}(z)=y\ \ \t{and}\ \ 
 N_{E/K}(z)=x^2+xy+Ay^2.
\end{equation}
For a fixed $\tau\in K$, put
\[
 \delta_\tau=1+\tau+A\tau^2\ \ \t{and}\ \ 
 n_\tau=c\delta_\tau+\tau(1+\tau)y.
\]
For $c\in K$, let $\Psi^{(2)}_\tau(a,b,c)$ denote the formula
\begin{equation}\label{eq:Psi-fixed-2}
 c=0\ \lor\ 
 \exists y,r,s\in K\,[\tau\delta_\tau\ne0\ \land\ 
 n_\tau^2+\tau^2yn_\tau+A\tau^4y^2
 +B\tau^4(r^2+rs+As^2)=\tau^4].
\end{equation}
Again, only three variables are quantified.

For the second disjunct, put
$
 h_\tau=1+\tau e$ and $
 \overline h_\tau=1+\tau+\tau e$. 
 Then $\N(h_{\tau})=\delta_{\tau}$.  If $\tau\delta_{\tau}\ne0$, define
 $\gamma_\tau=h_\tau/{\overline h_\tau}$.
It has norm one, and
\begin{equation}\label{eq:one-plus-gamma2}
 1+\gamma_{\tau}=\frac{\tau h_{\tau}}{\delta_{\tau}}.
\end{equation}
For $z=x+ye$, equations \eqref{eq:trace-norm-char2} and
\eqref{eq:one-plus-gamma2} give
\[
 \Tr_{E/K}\bigl(z(1+\gamma_\tau)\bigr)
 =\frac{\tau(x\tau+y(1+\tau))}{\delta}.
\]
Thus
$$
 \Tr(z)+\Tr(z\gamma_\tau)=c\iff
 x=\f {n_{\tau}}{\tau^2}.$$

 For $w=r+se$ and $\alpha=z+wj$,
substitution in
\[
 \Nrd(\alpha)=N(z)+BN(w)
\]
gives exactly the polynomial equation in \eqref{eq:Psi-fixed-2}.  Thus the
second disjunct again produces two norm-one elements whose reduced traces sum
to $c$.

\begin{proposition}\label{prop:fixed-inclusion-2}
Suppose that $\Psi^{(2)}_\tau(a,b,c)$ holds with $a,b,c,\tau\in K$. Then
\[ c\in\bigcap_{v\in\Delta(Q_{a,b})}\cO_v.
\]
\end{proposition}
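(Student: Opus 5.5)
The formula $\Psi^{(2)}_\tau(a,b,c)$ is a disjunction, so I would treat its two disjuncts separately, exactly as in Proposition \ref{prop:fixed-inclusion-odd}.

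If $c=0$, there is nothing to prove, since $0\in\cO_v$ for every $v$.

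Otherwise the second disjunct holds. Take $y,r,s\in K$ witnessing it; in particular $\tau\delta_\tau\ne0$, so $\gamma_\tau=h_\tau/\overline h_\tau$ is defined. Set
\[
 x=\frac{n_\tau}{\tau^2},\qquad z=x+ye,\qquad w=r+se,\qquad \alpha=z+wj\in D=Q_{a,b}.
\]
Then I would check the two identities needed.

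First, by the computation preceding the proposition, $\Tr(z)+\Tr(z\gamma_\tau)=c$, because $x$ was chosen precisely to make this hold.

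Second, expanding $\Nrd(\alpha)=N(z)+BN(w)$ with \eqref{eq:trace-norm-char2} gives
\[
 N(z)=\frac{n_\tau^2}{\tau^4}+\frac{n_\tau y}{\tau^2}+Ay^2 .
\]
Multiplying $\Nrd(\alpha)$ by $\tau^4$ therefore turns the equation in \eqref{eq:Psi-fixed-2} into $\tau^4\Nrd(\alpha)=\tau^4$. Since $\tau\ne0$, this gives $\Nrd(\alpha)=1$.

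Next comes the step I expect to need the most care: relating the quaternion traces to the traces in $E$. Since $\gamma_\tau\in E$ and $je'=\overline{e'}j$ for $e'\in E$, we have
\[
 \alpha\gamma_\tau=z\gamma_\tau+w\overline{\gamma_\tau}\,j .
\]
In this presentation, $\Trd(z'+w'j)=\Tr_{E/K}(z')$, since $wj$ has reduced trace $0$: indeed $(wj)^2=w\overline w B\in K$. Hence
\[
 \Trd(\alpha)+\Trd(\alpha\gamma_\tau)=\Tr(z)+\Tr(z\gamma_\tau)=c.
\]
Also $\Nrd(\alpha\gamma_\tau)=\Nrd(\alpha)N(\gamma_\tau)=1$, because the reduced norm is multiplicative and $\gamma_\tau$ has norm one.

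Finally, fix $v\in\Delta(Q_{a,b})$. Then $D\otimes_KK_v$ is a division algebra, and $\alpha$ and $\alpha\gamma_\tau$ are norm-one elements of it. Lemma \ref{lem:integral-trace} gives $\Trd(\alpha),\Trd(\alpha\gamma_\tau)\in\cO_v$, so their sum $c$ lies in $\cO_v$. As $v$ was arbitrary, $c\in\bigcap_{v\in\Delta(Q_{a,b})}\cO_v$.
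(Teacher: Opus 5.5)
Your proof is correct and matches the paper's: you dispose of $c=0$ trivially, and otherwise you apply Lemma~\ref{lem:integral-trace} at each $v\in\Delta(Q_{a,b})$ to the two reduced-norm-one elements $\alpha$ and $\alpha\gamma_\tau$, whose reduced traces sum to $c$. The only difference is that you write out the bookkeeping the paper leaves implicit, namely $\alpha\gamma_\tau=z\gamma_\tau+w\overline{\gamma_\tau}\,j$, $\Trd(wj)=0$, and $\Nrd(\alpha\gamma_\tau)=\Nrd(\alpha)N(\gamma_\tau)=1$.
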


\begin{proof}
The case $c=0$ is immediate.  Otherwise apply Lemma \ref{lem:integral-trace} to the
two norm-one elements just constructed.
\end{proof}

\section{Finite freezing at the fixed ramified places}

The purpose of this section is to replace a locally varying torus parameter by
finitely many constants.  We first record the precise parameterized form of
Hensel's lemma that will be used.

\begin{lemma}[Parameterized Hensel lemma]\label{lem:param-hensel}
Let $F$ be a complete discretely valued field.  Let
\[
 P(\boldsymbol\lambda,Z_1,\ldots,Z_n)
 \in F[\lambda_1,\ldots,\lambda_m,Z_1,\ldots,Z_n].
\]
Suppose that
\[
 P(\boldsymbol\lambda_0,\boldsymbol z_0)=0
 \quad\text{and}\quad
 \frac{\partial P}{\partial Z_j}
 (\boldsymbol\lambda_0,\boldsymbol z_0)\ne0
\]
for some $j$.  Then there is an open neighborhood $U$ of
$\boldsymbol\lambda_0$ such that, for every
$\boldsymbol\lambda\in U$, there is
$\boldsymbol z(\boldsymbol\lambda)\in F^n$ satisfying
\[
 P(\boldsymbol\lambda,\boldsymbol z(\boldsymbol\lambda))=0
 \ \ \t{and}\ \ 
 \frac{\partial P}{\partial Z_j}
 (\boldsymbol\lambda,\boldsymbol z(\boldsymbol\lambda))\ne0.
\]
The coordinates $Z_i$ with $i\ne j$ may be kept equal to their values in
$\boldsymbol z_0$.
\end{lemma}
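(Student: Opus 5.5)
The plan is to reduce to a one-variable Hensel lemma. Freeze every coordinate except $Z_j$ at its value in $\boldsymbol z_0$. Then treat the resulting polynomial in $Z_j$ with coefficients depending polynomially on $\boldsymbol\lambda$. Concretely, put
\[
 g_{\boldsymbol\lambda}(Z)=P(\boldsymbol\lambda,z_{0,1},\ldots,z_{0,j-1},Z,z_{0,j+1},\ldots,z_{0,n})\in F[Z],
\]
so that $g_{\boldsymbol\lambda_0}(z_{0,j})=0$ and $g'_{\boldsymbol\lambda_0}(z_{0,j})\ne0$. Normalize the valuation $v$ of $F$ and write $e_0=v(g'_{\boldsymbol\lambda_0}(z_{0,j}))\in\Z$.

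First, by a change of variable $Z=z_{0,j}+\varpi^{k}W$, with $\varpi$ a uniformizer and $k$ large, and after multiplying by a suitable power of $\varpi$, I may assume that everything is integral. I do not need this normalization if I instead use the general form of Newton's lemma: if $h\in F[Z]$ has coefficients in some fixed bounded set and $v(h(z))>2v(h'(z))$ with $z$ integral, then there is a root $\tilde z$ with $v(\tilde z-z)>v(h'(z))$ and $v(h'(\tilde z))=v(h'(z))$. The cleanest route is to replace $g$ by $\tilde g(W)=g(z_{0,j}+\varpi^kW)/\varpi^{m}$ with $k,m$ chosen so that $\tilde g_{\boldsymbol\lambda_0}$ has integral coefficients and $\tilde g'_{\boldsymbol\lambda_0}(0)$ has valuation $e$. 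Here the first step is to check that $k$ large makes the higher coefficients small relative to the linear one.

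Second, continuity. The coefficients of $\tilde g_{\boldsymbol\lambda}$ are polynomials in $\boldsymbol\lambda$, hence continuous. So there is an open neighborhood $U$ of $\boldsymbol\lambda_0$ on which all coefficients remain integral, $v(\tilde g'_{\boldsymbol\lambda}(0))=e$ (possible since valuation is locally constant away from $0$), and $v(\tilde g_{\boldsymbol\lambda}(0))>2e$ (possible since $\tilde g_{\boldsymbol\lambda_0}(0)=0$ and $\tilde g_{\boldsymbol\lambda}(0)\to0$).

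Third, apply the standard Hensel--Newton lemma over the complete ring $\cO_F$ to $\tilde g_{\boldsymbol\lambda}$ at $W=0$ for each $\boldsymbol\lambda\in U$. This gives a root $w(\boldsymbol\lambda)\in\cO_F$ with $v(\tilde g'_{\boldsymbol\lambda}(w(\boldsymbol\lambda)))=e$, and in particular the derivative is nonzero. Translating back, $z_j(\boldsymbol\lambda)=z_{0,j}+\varpi^k w(\boldsymbol\lambda)$ together with $z_i=z_{0,i}$ for $i\ne j$ solves $P=0$. The partial derivative $\partial P/\partial Z_j$ at this point equals $\varpi^{m-k}\tilde g'_{\boldsymbol\lambda}(w)\ne0$.

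The main obstacle is only bookkeeping: making sure the uniform bounds (integrality of the coefficients and the constancy of the derivative's valuation) hold simultaneously on a single neighborhood. That is handled by the rescaling in the first step.
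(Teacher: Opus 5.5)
Your proof is correct and follows the paper's route: freeze $Z_i=z_{0,i}$ for $i\ne j$, use continuity in $\boldsymbol\lambda$ to get $v(\tilde g_{\boldsymbol\lambda}(0))>2e$ and $v(\tilde g'_{\boldsymbol\lambda}(0))=e$ on one neighborhood, and apply Hensel--Newton. Two details of yours are cleaner than the paper's version. Your substitution $Z=z_{0,j}+\varpi^kW$ with division by $\varpi^{k+e_0}$ makes the linear coefficient a unit and, for $k$ large, the higher coefficients integral; this handles the integrality of $z_{0,j}$ and of the coefficients of $g_{\boldsymbol\lambda}$ more carefully than the paper's single rescaling of $P$. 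Reading off $v(\tilde g'_{\boldsymbol\lambda}(w))=e$ from Newton's lemma also replaces the paper's final shrinking of $U$. The one slip is your unused aside: if the coefficients are only bounded below (say $v\ge -N$) rather than integral, Newton's lemma needs $v(h(z))>2v(h'(z))+N$, so you were right to use the normalization instead.
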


\begin{proof}
Keep $Z_i=z_{0,i}$ for $i\ne j$ and write
\[
 f_{\boldsymbol\lambda}(T)=
 P(\boldsymbol\lambda,z_{0,1},\ldots,z_{0,j-1},T,
 z_{0,j+1},\ldots,z_{0,n}).
\]
Multiply $P$ once by a nonzero scalar so that its coefficients are integral;
this does not change its zero set or the nonvanishing of the selected partial
derivative.  Retaining the same notation, put
$d=f'_{\boldsymbol\lambda_0}(z_{0,j})\ne0$ and $e=v(d)$.  Polynomial
continuity gives a neighborhood $U$ of $\boldsymbol\lambda_0$ on which
\[
 v\bigl(f_{\boldsymbol\lambda}(z_{0,j})\bigr)>2e
 \ \ \t{and}\ \ 
 v\bigl(f'_{\boldsymbol\lambda}(z_{0,j})-d\bigr)>e.
\]
Thus $v(f'_{\boldsymbol\lambda}(z_{0,j}))=e$, and Hensel's inequality
produces a root $z_j(\boldsymbol\lambda)$ near $z_{0,j}$.  Shrinking $U$ if
necessary, continuity of the derivative gives
$f'_{\boldsymbol\lambda}(z_j(\boldsymbol\lambda))\ne0$.  This proves the
claim.  See also \cite[Chapter~II, \S4]{SerreLocalFields}.
\end{proof}

Fix $v\in S$, and write
\[
 F=K_v,\quad \cO=\cO_v,\quad \m=\m_v.
\]
The element $\pi$ has odd valuation at $v$.  Consider the compact parameter
space
\begin{equation}\label{eq:compact-parameters}
 \mathcal P_v=
 \{(a,b,c)\in F^3:\ a-u\in\m,\ b\in\cO^\times\ \t{and}\ c\in\cO\}.
\end{equation}
For every point of $\mathcal P_v$, the polynomial $X^2-X-a^2$ defines the
unramified quadratic extension of $F$, and $Q_{a,b}$ is the quaternion division
algebra over $F$: its cyclic parameter $b\pi$ has odd valuation.

\begin{lemma}\label{lem:square-approx}
Let $F=k(\!(\varpi)\!)$ with $k$ a finite field of characteristic $2$.  If
$q\in\cO_F^\times$ is not a square, then there is $y\in\cO_F^\times$ for which
\[
 v(1-qy^2)>0
 \quad\text{and}\quad
 2\nmid v(1-qy^2).
\]
\end{lemma}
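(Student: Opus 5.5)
We argue directly in $F=k(\!(\varpi)\!)$. Write $q=\sum_{n\ge0}q_n\varpi^n$ with $q_0\ne0$. Since $k$ is a finite field of characteristic $2$, it is perfect, so every element of $k$ is a square and $F^2=k(\!(\varpi^2)\!)$ consists exactly of the series supported on even exponents. The plan is to start from $1-qy^2$, whose valuation might be even, and repeatedly multiply $y$ by principal units to raise that valuation. Because $q$ is not a square, this process must stop at an odd valuation.

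First we normalize. Choose $y_0\in\cO_F^\times$ with $y_0^2\equiv q_0^{-1}\pmod{\m}$, which is possible because $k$ is perfect. Then $v(1-qy_0^2)>0$. Now suppose $y\in\cO_F^\times$ is given with $m=v(1-qy^2)>0$ and $m$ even, say $m=2l$. Write $qy^2=1+t\varpi^{2l}+(\text{higher})$ with $t\in k^\times$, and choose $s\in k$ with $s^2=t$. Put $y'=y(1+s\varpi^l)$. In characteristic $2$,
\[
qy'^2=qy^2(1+t\varpi^{2l}).
\]
The leading terms $t\varpi^{2l}$ from each factor cancel, so $v(1-qy'^2)>2l$. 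The new $y'$ is again a unit, so the step can be repeated.

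The main obstacle is showing that this iteration terminates. We argue by contradiction. If the valuation stayed even forever, we would obtain a sequence $y_n\in\cO_F^\times$ with $v(1-qy_n^2)\to\infty$. The correction factors $1+s\varpi^l$ have $l\to\infty$, so the infinite product converges and $y_n\to y_\infty\in\cO_F^\times$. Passing to the limit gives $qy_\infty^2=1$, so $q=y_\infty^{-2}$ is a square, contradicting the hypothesis.

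Hence after finitely many steps we reach $y\in\cO_F^\times$ with $v(1-qy^2)$ positive and odd. Here $1-qy^2\ne0$, since otherwise $q$ would be a square. This $y$ is the required element.
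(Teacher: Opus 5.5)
Your proof is correct, but it takes a different route from the paper.

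The paper uses the perfectness of $k$ once, through the decomposition $F=F^2\oplus\varpi F^2$. Writing $q^{-1}=r^2+\varpi s^2$, the two summands have valuations of opposite parity, so $v(r)=0$ and $v(s)\ge0$. Also $s\ne0$ because $q$ is not a square. The single choice $y=r$ then gives $1-qy^2=q\varpi s^2$, of odd valuation $1+2v(s)$.

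You instead run a Newton-type successive approximation that kills the leading term of $1-qy^2$ whenever it lies in even degree. Your identity $(1+s\varpi^l)^2=1+t\varpi^{2l}$ and the cancellation are right, and $l\ge1$ keeps $y$ a unit. You get termination by contradiction from completeness: a non-terminating run would converge to a square root of $q^{-1}$.

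The paper's route gives an explicit witness, avoids any limiting argument, and identifies the exact odd valuation. It also explains why your iteration works. For every unit $y$ one has $1-qy^2=q\bigl((r-y)^2+\varpi s^2\bigr)$, hence $v(1-qy^2)=\min\bigl(2v(r-y),1+2v(s)\bigr)$. So your procedure is just approximating $r$, and it stops, with valuation $1+2v(s)$, as soon as $2v(r-y)>1+2v(s)$.

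Your route, in exchange, is a mechanical Hensel-style argument. It needs only that residues are squares and that $F$ is complete, and it never splits all of $F$ into even and odd parts.
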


\begin{proof}
As $k$ is perfect, we have
$
 F=F^2\oplus\varpi F^2.$
Write $ q^{-1}=r^2+\varpi s^2$.
The two summands have valuations of opposite parity.  Since $q^{-1}$ is a
unit, we have $v(r)=0$ and $v(s)\ge0$.  Note that $s\ne0$ since $q^{-1}$ is not a square.
 Taking $y=r$ gives
$ 1-qy^2=q\varpi s^2$,
whose valuation is the odd number $1+2v(s)$. This concludes the proof.
\end{proof}

\begin{proposition}[Finite local freezing datum]\label{prop:local-freezing}
Let $v\in S$. Then there are a finite index set $I_v$, elements
$t_{v,i}\in K_v$, open subsets $U_{v,i}\subseteq\mathcal P_v$, and open
neighborhoods $W_{v,i}\subseteq K_v$ of $t_{v,i}$, with
\[
 \mathcal P_v=\bigcup_{i\in I_v}U_{v,i},
\]
such that for any $i\in I_v$, 
$(a,b,c)\in U_{v,i}$, and $\tau\in W_{v,i}$, the fixed-parameter
quaternion equation corresponding to $\tau$ has a smooth $K_v$-point.
\end{proposition}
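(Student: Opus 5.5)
The plan is a compactness argument. For each point $p_0=(a_0,b_0,c_0)\in\mathcal P_v$ we produce a value $t\in K_v$ and a smooth $K_v$-point of the fixed-parameter equation, at parameters $(a_0,b_0,c_0,t)$. We then apply Lemma \ref{lem:param-hensel}, with $\boldsymbol\lambda=(a,b,c,\tau)$ and unknowns $(y,r,s)$, to get a product neighbourhood $U_{p_0}\times W_{p_0}$ on which smooth points persist. Since $\mathcal P_v$ is compact (it is closed and bounded in $F^3$), finitely many $U_{p_0}$ cover it, and these give $I_v$, the $t_{v,i}$, the $U_{v,i}$ and the $W_{v,i}$. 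The side conditions $\delta_\tau\ne0$, or $\tau\delta_\tau\ne0$ in characteristic $2$, are open in the parameters, so we shrink $W$ to keep them. In characteristic $2$ the disjunct $c=0$ requires no point, but we still want a smooth point for every $c$, so we treat the second disjunct uniformly.

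\emph{Case $\ch(K)\ne2$.} Fix $p_0$ and let $E_v=F(i)$ with $i^2=A=1+4a_0^2$. Since $A\equiv 1+4u^2$, the field $E_v$ is the unramified quadratic extension, because $X^2-X-a_0^2$ is irreducible mod $\m$. Apply Lemma \ref{lem:local-trace} with $c=c_0$ to get $z_1,z_2\in\cO_{E_v}^\times$ of norm $1$ with $\Tr z_1+\Tr z_2=c_0$, $z_2\ne-z_1$ and $z_2\ne z_1^{-1}$. Put $\gamma=z_1^{-1}z_2$ and $\alpha=z_1\in E_v\subset Q$ (so $w=0$). Then $\gamma\ne-1$ because $z_2\ne-z_1$, and by Hilbert 90 we can write $\gamma=(1+ti)/(1-ti)$ with $t\in F$ and $\delta_t=1-At^2\ne0$; the value $\gamma=1$ gives $t=0$. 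Now $\Trd(\alpha)+\Trd(\alpha\gamma)=\Tr z_1+\Tr z_2=c_0$. Unwinding the substitution, $z=\frac{c-A\tau y}{4}+\frac{y-\tau c}{4}i$ is determined by $c$ and $y$, so $z=z_1$ fixes $y$. With $r=s=0$ the equation \eqref{eq:Psi-fixed-odd} then holds, since $\Nrd(\alpha)=N(z_1)=1$. For smoothness, $\partial/\partial y=-2\delta_\tau Ay$ and $\partial/\partial r,\partial/\partial s$ vanish at $r=s=0$. So I need $y\ne0$, which says $z_1$ is not in $F\cdot(1+\tau i)^{-1}$-type degenerate position. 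If $y=0$, I would instead perturb to a point with $w\ne0$, taking $r\ne0$ so that $\partial/\partial r=-32Br\ne0$. Concretely, choose $w$ small with $N(z)+BN(w)=1$ by Hensel in $y$ or in $x$. Alternatively, I would use the extra freedom that the condition $z_2\ne z_1^{-1}$ provides to ensure $y\ne0$; this is exactly what that condition is for, since $y$ is proportional to the $i$-component of $z_1(1-\tau i)\cdot$const, and a degeneration forces $\gamma=z_1^{-2}$, i.e.\ $z_2=z_1^{-1}$.

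\emph{Case $\ch(K)=2$.} Here $E_v=F(e)$ with $e^2+e=a_0^2$, which is unramified since $a_0\equiv u$. Lemma \ref{lem:local-trace} again gives $z_1,z_2$, and we write $\gamma=z_2/z_1=h_t/\overline h_t$, which is possible by Hilbert 90 except when $\gamma=1$. We must also have $t\delta_t\ne0$, and the excluded value $\gamma=1$ has to be handled separately. The derivative in $y$ of the equation in \eqref{eq:Psi-fixed-2} is $\tau^2 n_\tau+\tau(1+\tau)\cdot(\ldots)$, which may vanish identically since we are in characteristic $2$. This is where I expect the main obstacle to lie. When all partials vanish at $w=0$, I would move to a point with $w\ne0$: the $r$-derivative is $B\tau^4 s$, so $s\ne0$ suffices. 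To get such a point I need to solve $N(z)=1+BN(w)$ with $z$ of prescribed trace-sum. This amounts to representing a unit $1+B N(w)$ with $w$ small, and Lemma \ref{lem:square-approx} appears to be designed to control the parity and valuation in exactly this step: it produces $y$ with $1-qy^2$ of odd positive valuation, which can then be absorbed by $B N(w)$ since $v(B)$ is odd. Finally, if $\gamma=1$ is forced, I replace the pair $(z_1,z_2)$ using the slack in the counting $|U|+|\bar c-V|>q$ of Lemma \ref{lem:local-trace}, so as to avoid it.
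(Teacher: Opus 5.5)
You have the paper's overall framework: a smooth point at each $p_0\in\mathcal P_v$, then Lemma \ref{lem:param-hensel} in the parameters $(a,b,c,\tau)$, then compactness. Your odd-characteristic case is also the paper's argument. The point with $\alpha=z_1$, $r=s=0$ has $y=0$ exactly when $z_2=z_1^{-1}$, so Lemma \ref{lem:local-trace} already gives $-2A\delta_t y\ne0$, and your "perturb to $w\ne0$" alternative is not needed.

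The genuine gap is in characteristic $2$ at $c_0=0$. You cannot skip this case, because every neighbourhood of $(a_0,b_0,0)$ contains points with $c\ne0$.

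\begin{itemize}
\item At $c=0$ one has $\partial P/\partial y=c\,\delta_\tau\tau^2=0$. After dividing by $t^4$, the fiber reads $qy^2+BN(w)=1$ with $q=\delta_t/t^2$.
\item Your $t$ is extracted from a trace-sum pair, so the fiber carries the point $\alpha=z_1$ with $w=0$. That point satisfies $qy^2=1$, so $q$ is a square in $F$.
\item Then $1-qy^2=(1+\sqrt q\,y)^2$ has even valuation or vanishes. But for $w\ne0$, $BN(w)$ has odd valuation, because $v(B)$ is odd and $E_v/F$ is unramified.
\item Hence that fiber consists of the single point with $w=0$, and all its partial derivatives vanish there. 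There is no point with $w\ne0$ to move to.
\item Lemma \ref{lem:square-approx} is unavailable, since its hypothesis is exactly that $q$ be a nonsquare unit.
\item Re-choosing the pair via the counting slack in Lemma \ref{lem:local-trace} cannot help. Every pair produces a $w=0$ point, hence a square $q$.
\end{itemize}

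The missing idea is to decouple $t$ from any trace-sum pair when $c=0$.
\begin{itemize}
\item Choose a unit $t\notin F^2$ with $\delta_t=1+t+a^2t^2\in\cO^\times$. For example, take a constant lift of a nonzero residue avoiding the zeros of $1+T+\bar a^2T^2$, and add a small odd power of a uniformizer.
\item Then $q=t^{-2}+t^{-1}+a^2$ is a nonsquare unit, since only the middle term is a nonsquare and $F^2$ is an additive subfield.
\item Lemma \ref{lem:square-approx} gives $y$ with $v(t^2-\delta_t y^2)$ odd and positive.
\item So $\eta=(t^2-\delta_t y^2)/(Bt^2)$ has even valuation, and $\eta=r^2+rs+a^2s^2$ for some $(r,s)\ne(0,0)$.
\item One of $\partial P/\partial r=Bt^4s$ and $\partial P/\partial s=Bt^4r$ is then nonzero, giving the smooth point.
\end{itemize}

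Two smaller points in characteristic $2$:
\begin{itemize}
\item For $c\ne0$ your worry is unfounded. A direct computation gives $\partial P/\partial y=c\,\delta_\tau\tau^2\ne0$, so the $w=0$ point is already smooth.
\item The parametrization $\gamma=h_t/\overline h_t$ misses not only $\gamma=1$ but also the point at infinity $\gamma=e/(e+1)$. Handle it by swapping $z_1,z_2$. This is legitimate because $\gamma\ne1$ when $c\ne0$, so $\gamma^{-1}\ne\gamma$.
\end{itemize}
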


\begin{proof}
We first prove that every point $p=(a,b,c)\in\mathcal P_v$ admits a parameter
$t_p$ and a smooth solution.

Assume $\operatorname{char}F\ne2$.  Put $A=1+4a^2$ and $E=F(\sqrt A)$.  By
Lemma \ref{lem:local-trace}, we may choose norm-one elements $z_1,z_2\in E$ whose traces
sum to $c$ such that $z_2\ne-z_1,z_1^{-1}$.
Let $\gamma=z_1^{-1}z_2$.  Then $N(\gamma)=1$ and $\gamma\ne-1$, so the Cayley
parametrization gives
\[
 \gamma=\frac{1+t_p\sqrt A}{1-t_p\sqrt A}
\]
for some $t_p\in F$ with $1-At_p^2\ne0$.  Taking $\alpha=z_1$ gives a solution
of \eqref{eq:Psi-fixed-odd} with $r=s=0$.  In the coordinates used in
\eqref{eq:norm-z-fixed-odd}, its $y$-coordinate is zero exactly when
$z_2=z_1^{-1}$.  Hence $y\ne0$, and
\[
 \frac{\partial}{\partial y}
 \left(\delta_t(c^2-Ay^2)-16B(r^2-As^2)-16\right)
 =-2A\delta_t y
\]
is nonzero at the solution.

Assume now $\operatorname{char}F=2$.  If $c\ne0$, then $\gamma=z_1^{-1}z_2\ne1$ in view of 
Lemma \ref{lem:local-trace}. The norm-one torus is parametrized by
\[
 \mathbb P^1(F)\longrightarrow\{\gamma\in E^\times:N(\gamma)=1\},
 \qquad
 [X:Y]\longmapsto\frac{X+Ye}{X+Y(e+1)}.
\]
If $\gamma=e/(e+1)$ is the point at infinity, interchange $z_1,z_2$; the new
quotient is $\gamma^{-1}\ne\gamma$.  Thus
\[
 \gamma=\frac{1+t_pe}{1+t_p+t_pe}
\]
with $t_p(1+t_p+a^2t_p^2)\ne0$.  The corresponding solution of
\eqref{eq:Psi-fixed-2} has $r=s=0$, and formal differentiation gives
\begin{equation}\label{eq:char2-derivative-y}
 \frac{\partial P}{\partial y}=c\delta_{t_p}t_p^2\ne0,
\end{equation}
where $P$ denotes the polynomial 
$$n_\tau^2+\tau^2yn_\tau+A\tau^4y^2
 +B\tau^4(r^2+rs+As^2)-t_p^4.$$

It remains to treat $\operatorname{char}F=2$ and $c=0$ by a smooth solution of
the polynomial equation, rather than by the trivial first disjunct.  Choose a
unit $t_p\notin F^2$ such that
\[
 \delta_{t_p}=1+t_p+a^2t_p^2\in\cO^\times.
\]
Such a choice is easy: choose a nonzero residue value avoiding the at most two
zeros of $1+T+\bar a^2T^2$, take a constant lift, and add a sufficiently small
odd power of a uniformizer.  Then
\[
 q=\frac{\delta_{t_p}}{t_p^2}=t_p^{-2}+t_p^{-1}+a^2
\]
is a nonsquare unit, because $F^2$ is an additive subfield and only the middle
term is nonsquare.  By Lemma \ref{lem:square-approx}, we may choose $y\in\cO^\times$ such
that
\[
 d=v(t_p^2-\delta_{t_p}y^2)>0
\]
is odd.  Since $v(B)=v(b\pi)$ is odd,
\[
 \eta=\frac{t_p^2-\delta_{t_p}y^2}{Bt_p^2}
\]
has even valuation.  The norm group of the unramified quadratic extension
$E/F$ consists exactly of the elements of even valuation, so we can choose
$r,s\in F$ not both zero with
$ r^2+rs+a^2s^2=\eta.$
After division by $t_p^2$, this is precisely the $c=0$ specialization of
\eqref{eq:Psi-fixed-2}.  Moreover,
\[
 \frac{\partial P}{\partial r}=Bt_p^4s
 \ \ \t{and}\ \ 
 \frac{\partial P}{\partial s}=Bt_p^4r,
\]
so at least one partial derivative is nonzero.

In every case we have a solution at which one partial derivative with respect
to a witness variable is nonzero.  Applying Lemma \ref{lem:param-hensel} with
parameter tuple $(a,b,c,t)$, we get an open neighborhood of
$(a,b,c,t_p)$ on which the same polynomial equation remains smoothly soluble.
A basic product neighborhood inside it has the form $U_p\times W_p$, where
$U_p$ is relatively open in $\mathcal P_v$ and $W_p$ is an open neighborhood
of $t_p$.  The compact space $\mathcal P_v$ is covered by the $U_p$.  Choose a
finite subcover and rename the corresponding data as $U_{v,i}$, $W_{v,i}$ and
$t_{v,i}$.
\end{proof}

\section{A finite global family of frozen parameters}

Let
\[
 I=\prod_{v\in S}I_v.
\]
For $\mathbf i=(i_v)_{v\in S}\in I$, weak approximation gives global elements
which simultaneously lie in the local neighborhoods $W_{v,i_v}$.
\medskip

{\it Case} 1. $\ch(K)\not=2$.
\medskip

For each $\mathbf i\in I$, choose four distinct elements
\[
 \tau_{\mathbf i,1},\ldots,\tau_{\mathbf i,4}\in K
\]
such that
\[
 \tau_{\mathbf i,j}\in W_{v,i_v}
 \quad\text{for all }v\in S\text{ and }j=1,\ldots,4.
\]
This is practical because every simultaneous approximation neighborhood is
infinite.  Denote the resulting finite set by $\Lambda$.

Outside a finite set of places, the four elements attached to a fixed
$\mathbf i$ are integral and have pairwise distinct residues.  At most three
residue values can lie in $\{0,\pm1\}$.  Hence, outside that finite set, at
least one of the four residues lies outside $\{0,\pm1\}$.
\medskip

{\it Case}\ 2. $\ch(K)=2$.
\medskip

For each $\mathbf i\in I$, choose one
$ \tau_{\mathbf i}\in K$
with $\tau_{\mathbf i}\in W_{v,i_v}$ for every $v\in S$, and impose at one
additional place an odd valuation.  Weak approximation permits these
conditions simultaneously.  Thus
$\tau_{\mathbf i}\notin K^2.$
Let $\Lambda$ be the finite set of these elements.  Since the constant field of
$K$ is perfect, we have
\[
 \tau\notin K^2\quad\Longleftrightarrow\quad d\tau\ne0.
\]
For each $\tau\in\Lambda$, the nonzero differential $d\tau$ has a divisor of
finite support.  Therefore, outside a finite set of places, both $\tau$ and
$d\tau$ are local units.  See \cite[Chapter~III, \S4]{Stichtenoth} for
K\"ahler differentials of global function fields.
\medskip

In either case, we  define the finite-disjunction predicate
\begin{equation}\label{eq:Theta}
 \Theta(a,b,c)=\bigvee_{\tau\in\Lambda}\Psi_\tau(a,b,c),
\end{equation}
where the appropriate characteristic-dependent formula is used.  Each branch
uses the same three witness variables.  Hence $\Theta$ is $\exists^3$-definable.

\section{The additional target place in the case $2\nmid \ch(K)$}

Assume $\ch(K)\ne2$.  The places outside $S$ are non-dyadic.
We first prove the finite-field lemma used to choose $a$ at a new place.

\begin{lemma}[Finite-field selection]\label{lem:finite-field-odd}
Let $k$ be a finite field with $\ch(k)\not=2$ and $|k|=q>25$, and let $\chi$ be its quadratic
character, extended by $\chi(0)=0$. For any
$\tau\in k\setminus\{0,\pm1\}$, there exists $a\in k$ such that
\[
 \chi(1+4a^2)=-1\ \ \t{and}\ \ 
 \chi\bigl(-\,(1+4a^2)(1-\tau^2(1+4a^2))\bigr)=1.
\]
\end{lemma}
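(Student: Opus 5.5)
We need $a\in k$ with $A=1+4a^2$ a nonsquare and $-A(1-\tau^2A)$ a nonzero square. The plan is a character-sum count. Put $A(a)=1+4a^2$ and $g(a)=-A(a)\bigl(1-\tau^2A(a)\bigr)$. For $a$ with $A(a)g(a)\ne0$, the quantity
\[
 \tfrac14\bigl(1-\chi(A(a))\bigr)\bigl(1+\chi(g(a))\bigr)
\]
equals $1$ exactly when both conditions hold and $0$ otherwise. Summing over $a\in k$ gives
\[
 N\ \ge\ \tfrac14\Bigl(q-\sum_a\chi(A)+\sum_a\chi(g)-\sum_a\chi(Ag)\Bigr)-E,
\]
where $E$ is a bounded correction (at most about $6$) for the at most six values of $a$ where $A$ or $g$ vanishes. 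The key observation is that $\chi(Ag)=\chi(-A^2(1-\tau^2A))=\chi\bigl(-(1-\tau^2A)\bigr)$ whenever $A\ne0$. So the three character sums involve the polynomials $1+4a^2$, $(1+4a^2)(\tau^2(1+4a^2)-1)$, and $\tau^2(1+4a^2)-1$, each a polynomial in $a$ of degree $2$ or $4$.

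The sums of degree $2$ are exact. Since $\tau\ne0$ and $\ch k\ne2$, $1+4a^2$ has nonzero discriminant, so $\sum_a\chi(1+4a^2)=-\chi(4)=-1$. Also $\tau^2(1+4a^2)-1=4\tau^2a^2+(\tau^2-1)$ has discriminant a nonzero multiple of $\tau^2(\tau^2-1)$, which is nonzero because $\tau\ne\pm1$; hence $\sum_a\chi(\tau^2A-1)=-\chi(4\tau^2)=-1$.

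The quartic $g$ needs the Weil bound $|\sum_a\chi(g(a))|\le(\deg g-1)\sqrt q=3\sqrt q$. This requires $g$ not to be a constant times a square. Here $g$ is a product of two quadratics in $a^2$, namely $1+4a^2$ and $\tau^2(1+4a^2)-1$, each squarefree as shown above. They share no root: a common root would force $1+4a^2=0$ and $\tau^2\cdot0-1=0$, which is impossible. So $g$ is squarefree of degree $4$.

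Hence $4N\ge q-3\sqrt q-O(1)$, with the constants tracked: the two exact sums contribute $+1$ and $-(-1)=+1$, and the exceptional $a$ (zeros of $A$ and of $1-\tau^2A$, at most four) are subtracted. This is positive once $q-3\sqrt q-c>0$ for a small constant $c$ of about $10$–$16$, and that holds for $q>25$, since $\sqrt q>5$ gives $q-3\sqrt q>10$.

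The main obstacle is this bookkeeping of the exceptional values against the threshold $q>25$. If the crude constant is too large, I will handle the boundary cases $q=27,29,31,\dots$ with sharper counting, using that $\chi(A)=-1$ already excludes $A=0$. Failing that, I will use the Hasse bound for the genus-one curve $w^2=g(a)$ directly, whose $2\sqrt q$ constant is better than the Weil sum bound.
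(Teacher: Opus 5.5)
Your proposal is correct and is essentially the paper's proof. Both use the same indicator-weighted character sum, evaluate the two quadratic sums exactly, and apply the Weil bound $3\sqrt q$ to the squarefree quartic $\pm A(1-\tau^2A)$. The paper handles the sign via $\varepsilon=-\chi(-1)$ instead of your identity $\chi(Ag)=\chi(\tau^2A-1)$ for $A\ne0$, but this produces the same three sums.

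The bookkeeping you flagged closes without boundary cases or the Hasse bound. At each of the at most four exceptional $a$ one has $g(a)=0$, so its summand is at most $\tfrac12$. Even allowing the error of at most $2$ in your second ``exact'' sum, you get $4N\ge q-3\sqrt q-8$. This is positive, since $q>25$ gives $q-3\sqrt q>10$.
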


\begin{proof}
Put $
 f(X)=1+4X^2$ and $
 g(X)=1-\tau^2f(X).$
The assumptions on $\tau$ imply that $f$ and $g$ are separable coprime
quadratics.  Hence $fg$ is square-free and of degree $4$.  For
 $\varepsilon=-\chi(-1)$, we seek $a$ with $\chi(f(a))=-1$ and $\chi(g(a))=\varepsilon$.  Consider
\[
 \widetilde N=\frac14\sum_{a\in k}
 (1-\chi(f(a)))(1+\varepsilon\chi(g(a))).
\]
The contributions of the at most four zeros of $fg$ differ from the desired
indicator count by at most $2$ in total.  The standard quadratic character
sum gives
\[
 \left|\sum_{a\in k}\chi(f(a))\right|\le1\ \ \t{and}\ \ 
 \left|\sum_{a\in k}\chi(g(a))\right|\le1.
\]
In view of the Weil bound, we have
 $$\left|\sum_{a\in k}\chi(f(a)g(a))\right|\le3\sqrt q.$$
Therefore the number $N$ of desired elements satisfies
\[
 N\ge \frac{q-2-3\sqrt q}{4}-2
 =\frac{q-3\sqrt q-10}{4}>0.
\]
The second displayed character condition is equivalent to
$\chi(g(a))=-\chi(-1)$ once $\chi(f(a))=-1$.
\end{proof}
\begin{remark}
The Weil bound used here is the standard multiplicative-character estimate for
a square-free polynomial; see \cite[Theorem~5.41]{LidlNiederreiter}.
\end{remark}

Let $w\notin S$ and suppose $m=v_w(x)>0$.  We exclude once for any
finite set of places containing
the support of $\pi$, the poles of the elements of $\Lambda$ and the zeros of all pairwise
 differences among the four parameters attached to each $\mathbf i$,
and all places with $|\kappa_w|\le25$.
There are only finitely many such places.

At each $v\in S$, choose local values $a_v\in u+\m_v$ and $b_v\in\cO_v^\times$ 
such that
\begin{equation}\label{eq:nonzero-local-denominator}
 1-x^3-a_v^2x^6\ne0.
\end{equation}
This excludes at most two values of $a_v$ from the infinite open coset
$u+\m_v$, so such a choice is always practical.  Put
\[
 c_v=h(a_v,b_v,x^3)\in\cO_v.
\]
Daans' valuation calculation in the proof of
\cite[Lemma~5.5]{Daans2024} gives the displayed integrality.  Condition
\eqref{eq:nonzero-local-denominator} also ensures that $h$ is continuous in a
neighborhood of the selected local pair $(a_v,b_v)$; this continuity is needed
when the global parameters are approximated later.
Choose $i_v\in I_v$ with $(a_v,b_v,c_v)\in U_{v,i_v}$ and put
$\mathbf i=(i_v)_{v\in S}$.  Among the four global parameters attached to
$\mathbf i$, choose $\tau$ whose residue at $w$ is not among $0,1,-1$.
All four belong to the required neighborhoods at the places in $S$.

Apply Lemma \ref{lem:finite-field-odd} in $\kappa_w$ to choose a residue $\bar a_w$
for which
$ A_w=1+4\bar a_w^2$
is a nonsquare and
$-A_w(1-A_w\bar\tau^2)$
is a square.  Choose a local lift $a_w\in\cO_w^\times$.  Weak approximation
then gives $a\in K$ arbitrarily close to $a_v$ at $v\in S$ and to $a_w$ at
$w$.  In particular,
$A=1+4a^2$
is a nonsquare unit at every place in $S\cup\{w\}$, and
$ \delta_\tau=1-A\tau^2$
is a unit at $w$ with $-A\delta_\tau$ a square unit.

Later we shall choose $b$ close to $b_v\ (v\in S)$ with
$v_w(b)=1.$
Since $w(\pi)=0$, this gives $v_w(B)=1$.  Put
$ c=h(a,b,x^3).$
Daans' formula for $g$ gives
$ v_w(g(a,b))=-2.$
The denominator in \eqref{eq:h} is a unit at $w$, and also $v_w(a)=0$.  Therefore
\begin{equation}\label{eq:c-target-valuation}
 v_w(c)=6m-2\ge4.
\end{equation}
Reducing \eqref{eq:Psi-fixed-odd} modulo $\m_w$ gives
$ -\bar A\bar\delta_\tau\,\bar y^2=16.$
This has a nonzero residue solution.  Its derivative with respect to $y$ is a
unit, so Hensel's lemma gives a $K_w$-solution of the fixed-parameter equation.

\section{The additional target place in the case $\ch(K)=2$}

Assume $\ch(K)=2$.  Exclude a finite set of places containing
the support of $\pi$,  the support of the divisor of each $\tau\in\Lambda$ and 
the support of
 the nonzero differential $d\tau$ with $\tau\in\Lambda$,
 and the places with residue field of cardinality $2$.

Let $w$ be outside this set and suppose $m=v_w(x)>0$.

As in the odd-characteristic case, we may choose local data at $S$, select the tuple
$\mathbf i$, and let $\tau=\tau_{\mathbf i}$.  At $w$, both $\tau$ and
$d\tau$ are units.  Choose $\bar a_w\in\kappa_w$ such that
\begin{equation}\label{eq:char2-a-residue}
 \Tr_{\kappa_w/\F_2}(\bar a_w)=1\ \ \t{and}\ \ 
 1+\bar\tau+\bar a_w^2\bar\tau^2\ne0.
\end{equation}
The trace-one affine hyperplane has $|\kappa_w|/2\ge2$ elements, whereas the
second condition excludes at most one element.  Thus the choice is practical.
The first condition implies that
 $X^2+X+\bar a_w^2$
is irreducible.  Lift $a_w$ to a unit of $K_w$ and choose globally
$a\in K$ close to the prescribed local data at $S$ and $w$.

Put
\[
 A=a^2,
 \qquad
 \delta_\tau=1+\tau+A\tau^2,
 \qquad
 q=\frac{\delta_\tau}{\tau^2}.
\]
The second condition in \eqref{eq:char2-a-residue} says that $q$ is a unit.
Since differentials of squares vanish, we have
\begin{equation}\label{eq:dq}
 dq=d(\tau^{-1})=\tau^{-2}d\tau.
\end{equation}
Hence $dq$ is a unit differential.  Write $K_w\simeq\kappa_w(\!(\varpi_w)\!)$.
The Laurent expansion of $q$ has a nonzero coefficient of $\varpi_w^1$.
Choose a unit $y_0$ with
$ \bar q\,\bar y_0^2=1.$
Because a square has no linear term in characteristic $2$, we have
\begin{equation}\label{eq:defect-one}
 v_w(1-qy_0^2)=1\ \ \t{and}\ \ 
 v_w(\tau^2-\delta_\tau y_0^2)=1.
\end{equation}

Choose $b$ later with $v_w(b)=1$.  Since $w(\pi)=0$, we have $v_w(B)=1$, and
$\eta=(\tau^2-\delta_\tau y_0^2)/{B\tau^2}$
is a unit.  The extension $K_w(e)/K_w$ with $e^2+e=A$ is unramified quadratic,
so the norm map on units is surjective.  Choose $r_0,s_0\in\cO_w$, at least
one a unit, with
 $r_0^2+r_0s_0+As_0^2=\eta.$
Then $(y_0,r_0,s_0)$ is a smooth solution of the $c=0$ specialization of the
polynomial equation in \eqref{eq:Psi-fixed-2}; one of
\[
 \frac{\partial P}{\partial r}=B\tau^4s_0\ \ \t{and}\ \ 
 \frac{\partial P}{\partial s}=B\tau^4r_0
\]
has valuation exactly $1$.

For the actual element
$c=h(a,b,x^3)$, 
Daans' characteristic-$2$ formula for $g$ again gives
$
 v_w(c)=6m-2\ge4.$
Let $P_c$ denote the polynomial 
$$n_\tau^2+\tau^2yn_\tau+A\tau^4y^2
 +B\tau^4(r^2+rs+As^2)-\tau^4.$$
 At the above $c=0$ point,
\[
 P_c-P_0=c^2\delta_\tau^2+c\delta_\tau\tau^2y_0,
\]
so
\[
 v_w(P_c(y_0,r_0,s_0))\ge4.
\]
Choosing the witness variable whose derivative has valuation $1$, Hensel's
inequality
$ v(P_c)>2v(P_c')
$
holds.  Hence the fixed-parameter equation has a $K_w$-solution for the actual
$c$.

\section{Choosing the global quaternion parameters}

We first isolate the approximation statement used to adjust a cyclic parameter
by one global norm.

\begin{lemma}[Global norm approximation]\label{lem:norm-approx}
Let $E/K$ be a finite separable extension of global fields.  Let $T$ be a
finite set of places of $K$ such that $E\otimes_KK_v$ is a field for every
$v\in T$, and write $E_v=E\otimes_KK_v$.  For each $v\in T$, let
$z_v\in E_v^\times$, and let
$\mathcal U_v\subseteq K_v^\times$ be an open neighborhood of
$N_{E_v/K_v}(z_v)$.  Then there exists $z\in E^\times$ such that
\[
 N_{E/K}(z)\in\mathcal U_v
 \qquad(v\in T).
\]
\end{lemma}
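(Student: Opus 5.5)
The plan is to approximate the local elements $z_v$ by a single global element of $E$, using weak approximation for $K$ together with continuity of the norm map.

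The key observation is that $E\otimes_K K_v$ is a field for each $v\in T$. This means there is exactly one place $\tilde v$ of $E$ above $v$, and $E_v=E\otimes_KK_v$ is the completion $E_{\tilde v}$.

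\emph{Step 1 (choose a global approximant).} The places $\tilde v$, for $v\in T$, are pairwise distinct because they lie over distinct places of $K$. Weak approximation in the global field $E$ therefore gives $z\in E$ arbitrarily close to $z_v$ in $E_{\tilde v}$ for every $v\in T$ simultaneously. If one prefers to stay in $K$, the same conclusion follows from a basis: fix a $K$-basis $\omega_1,\ldots,\omega_n$ of $E$. It is also a $K_v$-basis of $E_v$, since $E\otimes_KK_v=E_v$. Write $z_v=\sum_k x_{v,k}\omega_k$ with $x_{v,k}\in K_v$, and use weak approximation in $K$ to pick $x_k\in K$ close to $x_{v,k}$ for all $v\in T$. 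Then set $z=\sum_k x_k\omega_k$.

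\emph{Step 2 (continuity of the norm).} On $E_v$ the norm $N_{E_v/K_v}$ is a homogeneous polynomial of degree $n$ in the coordinates with respect to $\omega_1,\ldots,\omega_n$, with coefficients in $K$. Its restriction to $E$ is $N_{E/K}$, because the norm is the determinant of multiplication and this commutes with base change. A polynomial map is continuous, so the preimage of $\mathcal U_v$ is an open neighbourhood of $z_v$ in $E_v$. Choosing $z$ inside all these preimages gives $N_{E/K}(z)\in\mathcal U_v$ for every $v\in T$.

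\emph{Step 3 (nonvanishing).} We need $z\ne0$. Since $z_v\ne0$, the neighbourhood $\mathcal U_v\subseteq K_v^\times$ does not contain $0$, so $N(z)\in\mathcal U_v$ already forces $z\neq0$. If $T$ is empty, take $z=1$.

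Nothing here is a serious obstacle. The only points needing care are that the norm commutes with base change, which is where the hypothesis that $E\otimes_KK_v$ is a field enters, and that separability keeps $E\otimes_KK_v$ reduced. Separability is not essential for continuity, but it guarantees the identification of $E\otimes_K K_v$ with the completion.
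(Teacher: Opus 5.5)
Your proposal is correct and follows the paper's proof: continuity of the local norm gives open neighbourhoods of $z_v$ mapped into $\mathcal U_v$, weak approximation in $E$ at the unique place above each $v\in T$ yields a global $z$ lying in all of them, and compatibility of global and local norms finishes the argument. Your basis variant using weak approximation in $K$ and your explicit check that $z\neq0$ are harmless additions to the same argument.
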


\begin{proof}
For each $v\in T$, continuity of the local norm gives an open neighborhood
$\mathcal V_v$ of $z_v$ in $E_v^\times$ with
$N_{E_v/K_v}(\mathcal V_v)\subseteq\mathcal U_v$.  Weak approximation in the
global field $E$ gives $z\in E^\times$ lying in every $\mathcal V_v$.
Compatibility of global and local norms then gives the assertion.
\end{proof}

We now justify that the local data used above can be assembled into parameters
$(a,b)\in\Phi_u^S$ for which
$ \Delta(Q_{a,b})=S\cup\{w\}.$
The preceding sections already selected $a\in K$ so that the quadratic
extension
$ E=K(e)$ with $e^2-e=a^2$
is unramified quadratic at every place in $S\cup\{w\}$.

Since $|S|$ is odd, $S\cup\{w\}$ has even cardinality.  The global Brauer
exact sequence gives a unique Brauer class whose local invariant is $1/2$ at
the places in $S\cup\{w\}$ and $0$ at every other place.  Over a global field,
period equals index, so this class is represented by a quaternion algebra
$Q/K$; at real places it is split.  For global function fields, use
\cite[Corollary~6.5.4 and Remark~6.5.5]{GilleSzamuely}; for number fields, use
the Albert--Brauer--Hasse--Noether theorem and period--index statement recalled
in \cite[Remark~6.5.6]{GilleSzamuely}.

We spell out why $E$ embeds in $Q$.  If $v\in S\cup\{w\}$, then $E_v/K_v$ is
quadratic and
\[
 \operatorname{inv}(Q\otimes_KE_v)
 = [E_v:K_v]\operatorname{inv}(Q_v)
 =2\cdot\frac12=0.
\]
At every other place the invariant of $Q$ is already zero.  The global Brauer
exact sequence therefore implies that $Q\otimes_KE$ is split.  Since
$[E:K]=\deg Q=2$, the quadratic splitting-field criterion yields an embedding
$E\hookrightarrow Q$.  Consequently the cyclic-algebra description gives
$ Q\simeq[a^2,B_0)_K$
for some $B_0\in K^\times$.  When $\ch(K)\not=2$, this is
\cite[Proposition~1.2.3]{GilleSzamuely}; the characteristic-$2$ counterpart is
recorded in \cite[Chapter~1, Exercise~4]{GilleSzamuely}.

Two cyclic presentations $[a^2,B_0)$ and $[a^2,B)$ define the same algebra if
$B/B_0$ is a norm from $E$.  At every $v\in S\cup\{w\}$, the extension
$E_v/K_v$ is unramified.  In a cyclic presentation using this unramified
quadratic extension, the algebra is division exactly when the cyclic parameter
has odd valuation.  Thus both $B_0$ and the desired local parameter $b_v\pi$
(for $v\in S$), respectively $b_w\pi$ (for $v=w$), have odd valuation.  Their
ratio has even valuation.  Since the local norm group from an unramified
quadratic extension consists exactly of the elements of even valuation, and
the norm map on units is surjective, we may choose local elements
$z_v\in E_v^\times$ such that
$ B_0N(z_v)$
is as close as desired to $b_v\pi$ at $v\in S$, and has the form $b_w\pi$
at $w$ with $v_w(b_w)=1$.  Weak approximation in the field $E$ supplies one
$z\in E^\times$ close to all the $z_v$.  Set
$ B=B_0N_{E/K}(z)$ and $ b=B/\pi.$
Then
\[
 Q\simeq[a^2,b\pi)_K=Q_{a,b},
\]
$b$ is a unit and as close to $b_v$ as required at every $v\in S$, and
$v_w(b)=1$.  Thus $(a,b)\in\Phi_u^S$ and
$ \Delta(Q_{a,b})=S\cup\{w\}.$

Because $a$ and $b$ can be chosen arbitrarily close to the previously selected
local values at $S$, the corresponding triples
$ (a,b,h(a,b,x^3))$
remain in the selected open sets $U_{v,i_v}$.  The chosen frozen parameter
$\tau$ belongs to every $W_{v,i_v}$, so the fixed-parameter equation has a
smooth local point at every $v\in S$.  Sections 7 and 8 provide a local point
at $w$.

At every other completion, $Q_{a,b}$ is split.  We record the standard matrix
argument which supplies a local point there.  In the case $2\nmid\ch(K)$, let
\[
 \gamma=\frac{1+\tau i}{1-\tau i}\ \ \t{and}\ \  C=1+\gamma.
\]
When $\ch(K)=2$, we use the corresponding Artin--Schreier expression.  The
branch condition ensures that $C$ is invertible.  Under
$Q_{a,b}\otimes K_v\simeq M_2(K_v)$, we choose
\[
 M=\begin{pmatrix}0&-\det C\\1&c\end{pmatrix}
 \ \ \t{and}\ \ 
 \alpha=MC^{-1}.
\]
Then $
 \det\alpha=1$ and $ \operatorname{tr}(\alpha C)=c$.
Decomposing $\alpha$ in the quaternion basis gives a local solution of the
fixed-parameter equation.

\section{The Hasse principle for the fixed fiber}

We shall use the following elementary affine-point lemma.

\begin{lemma} \label{lem:affine-point}
Let $K$ be an infinite field, let $q$ be a nondegenerate quadratic form on a
finite-dimensional $K$-vector space $V$, and let $\ell:V\to K$ be a nonzero
linear form.  If $q$ has a nonzero isotropic vector, then it has a nonzero
isotropic vector $P$ with $\ell(P)\ne0$.
\end{lemma}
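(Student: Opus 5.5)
Let $P_0\ne0$ be an isotropic vector, $q(P_0)=0$. If $\ell(P_0)\ne0$ we are done, so assume $\ell(P_0)=0$. The plan is to move along lines through $P_0$, exploiting that a nondegenerate form with an isotropic vector contains a hyperbolic plane.

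\emph{Step 1.} Let $b$ be the polar bilinear form, $b(x,y)=q(x+y)-q(x)-q(y)$. Nondegeneracy of $q$ gives a vector $Q$ with $b(P_0,Q)\ne0$; in characteristic $2$ this uses the convention that nondegenerate means the radical of $b$ meets no nonzero vector on which $q$ vanishes. I will choose $Q$ so that in addition $\ell(Q)\ne0$. The set $\{Q: b(P_0,Q)\ne0\}$ is the complement of a hyperplane, and $\{Q:\ell(Q)\ne0\}$ is the complement of another hyperplane. Over an infinite field a vector space is not a union of two proper subspaces, so a common $Q$ exists.

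\emph{Step 2.} For $t\in K$ consider the line $R_t=tP_0+Q$. Then
\[
q(R_t)=t\,b(P_0,Q)+q(Q),
\]
which is linear in $t$ with nonzero slope. Hence there is a unique $t_0=-q(Q)/b(P_0,Q)$ with $q(R_{t_0})=0$. Moreover $\ell(R_{t_0})=t_0\ell(P_0)+\ell(Q)=\ell(Q)\ne0$, because $\ell(P_0)=0$. This also shows $R_{t_0}\ne0$, so $P=R_{t_0}$ is the required isotropic vector.

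The only delicate point is the existence of $Q$ in Step 1 when $\operatorname{char}K=2$, where $b$ may be degenerate. Here I will invoke the hypothesis directly: if $b(P_0,\cdot)\equiv0$, then $P_0$ lies in the radical of $b$ with $q(P_0)=0$, which contradicts nondegeneracy in the sense used for the norm forms of $[A,B)_K$. Those forms are nonsingular with trivial polar radical, so this is the case relevant to the application. After that, the infiniteness of $K$ is used only to avoid the union of two hyperplanes.
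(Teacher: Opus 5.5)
Your proof is correct and is essentially the paper's argument: both take the second point where the quadric meets the line through $P_0$ and an auxiliary vector chosen outside $\ker\ell$ and $\ker b(P_0,\cdot)$. Because you parametrize that line as $tP_0+Q$ with $q(P_0)=0$, the equation is linear in $t$. You therefore do not need the paper's extra condition $q(R)\ne0$, nor even that $K$ is infinite, since no vector space over any field is a union of two proper subspaces.
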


\begin{proof}
Let $P$ be a nonzero isotropic vector.  If $\ell(P)\ne0$, there is nothing to
prove.  Otherwise let $B_q$ be the polar form.  Because $q$ is nondegenerate,
$B_q(P,-)$ is a nonzero linear functional.  The three conditions
\[
 \ell(R)\ne0,
 \qquad q(R)\ne0,
 \qquad B_q(P,R)\ne0
\]
define nonempty Zariski-open subsets of $V$.  Since $K$ is infinite, their
intersection contains some $R$.  Note that we always have
\[
 q(P+\lambda R)=\lambda B_q(P,R)+\lambda^2q(R).
\]
 Its second root is
$\lambda=-B_q(P,R)/q(R)$, which is nonzero.  For
$P'=P+\lambda R$, we have $q(P')=0$ and
$\ell(P')=\lambda\ell(R)\ne0$.
\end{proof}

It remains to pass from local solutions to a global one.

When $\ch(K)\not=2$, homogenizing
\eqref{eq:Psi-fixed-odd} gives the quaternary quadratic form
\begin{equation}\label{eq:quadric-odd}
 -A\delta_\tau Y^2-16BR^2+16ABS^2
 +(\delta_\tau c^2-16)W^2=0.
\end{equation}
At the target place, $\delta_\tau$ is a unit and $c\in\m_w^4$, so
$\delta_\tau c^2-16$ is a unit.  Thus the form is nondegenerate.

In the case $\ch(K)=2$, we put
\[
 N=c\delta_\tau W+\tau(1+\tau)Y.
\]
The homogenized form is
\begin{equation}\label{eq:quadric-2}
 \begin{split}
 q(Y,R,S,W)={}&N^2+\tau^2YN+A\tau^4Y^2\\
 &+B\tau^4(R^2+RS+AS^2)+\tau^4W^2.
 \end{split}
\end{equation}
Its polar form has the two cross-block coefficients
$ c\delta_\tau\tau^2\,YW$ and $B\tau^4\,RS.$
With respect to the ordered basis $(Y,W,R,S)$, its matrix is block diagonal
with two alternating $2\times2$ blocks having these nonzero off-diagonal
entries.  Its determinant is therefore
\[
 (c\delta_\tau\tau^2)^2(B\tau^4)^2\ne0.
\]
Thus the polar form is nondegenerate.  This is the meaning of nonsingularity
for an even-dimensional quadratic form in characteristic $2$.

The preceding sections give an affine point over every completion of $K$.
The Hasse--Minkowski theorem therefore gives a nonzero global isotropic vector.
For number fields in characteristic different from $2$, see
\cite[Theorem~66:1]{OMeara}; the global-field formulation, including global
function fields of odd characteristic, is recalled in
\cite[Introduction]{Cassady}.  In characteristic $2$, see
\cite[Theorem~3.2]{Pollak} and the modern statement in
\cite[Introduction]{Wu2019}.

Applying Lemma \ref{lem:affine-point} with $\ell(Y,R,S,W)=W$, we obtain a global
isotropic vector with $W\ne0$.  Scaling to $W=1$ gives a global affine solution
of the chosen frozen branch.

\section{Proof of Theorem \ref{thm:main}}
\setcounter{equation}{0}

Define
\[
 \Phi_u^S=
 \left\{(a,b)\in K^2:
 b\in\bigcap_{v\in S}\cO_v^\times,
 \quad a-u\in\bigcap_{v\in S}\m_v
 \right\}.
\]
Daans \cite[Lemma~5.2]{Daans2024} proved that $\Phi_u^S$ is $\exists^3$-definable.

Let $Q_{a,b}=[a^2,b\pi)_K$. The characteristic-independent quaternion symbol is generated by $e$ and $j$ with
\[
 e^2-e=a^2,\qquad j^2=b\pi\ \ \t{and}\ \  ej+je=j.
\]
Define
 a rational function $g(X,Y)$ as in \cite{Daans2024}:
\[
 g(X,Y)=
 \begin{cases}
 \displaystyle
 \frac{16X^4}{1+4X^2}
 -\left(\frac{(Y-1)^2}{Y}\right)^2
 &\t{if}\ \operatorname{char}K\ne2,\\[2.0ex]
 \displaystyle
 X^5\left[
 \left(\frac{(Y-1)^2}{Y}\right)^2
 -\frac{(Y-1)^2}{Y}-X^2
 \right]
 &\t{if}\ \operatorname{char}K=2.
 \end{cases}
\]
Put
\begin{equation}\label{eq:h}
 h(a,b,z)=
 \frac{a^2z^2g(a,b)}{1-z-a^2z^2},
\end{equation}
interpreted as a term in the language of fields with $0^{-1}=0$.
Daans' key bridge is
\begin{equation}\label{eq:Daans-bridge}
 z\in\bigcup_{w\in V_K\setminus S}\m_w
 \quad\Longleftrightarrow\quad
 \exists(a,b)\in\Phi_u^S\l[
 h(a,b,z)\in\bigcap_{v\in\Delta(Q_{a,b})}\cO_v\r].
\end{equation}
This is \cite[Lemma~5.5]{Daans2024}.

\begin{proposition}\label{prop:seven-bridge}
Let $E_{\mathrm{exc}}\subseteq V_K\setminus S$ be the finite exceptional set
removed in the target-place arguments of Sections 7 and 8, and define
\[
 D_\Theta=
 \left\{x\in K:
 \exists(a,b)\in\Phi_u^S\ [
 \Theta\bigl(a,b,h(a,b,x^3)\bigr)]
 \right\}.
\]
Then
\begin{equation}\label{eq:sandwich}
 \bigcup_{w\in V_K\setminus(S\cup E_{\mathrm{exc}})}\m_w
 \ \subseteq\ D_\Theta\ \subseteq\
 \bigcup_{w\in V_K\setminus S}\m_w.
\end{equation}
Consequently,
\begin{equation}\label{eq:full-union-from-sandwich}
 D_\Theta\ \cup\
 \bigcup_{w\in E_{\mathrm{exc}}}\m_w
 =\bigcup_{w\in V_K\setminus S}\m_w.
\end{equation}
\end{proposition}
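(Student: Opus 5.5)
The plan is to prove the two inclusions in \eqref{eq:sandwich} separately, and then deduce \eqref{eq:full-union-from-sandwich} formally.

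\emph{Right inclusion.} Suppose $x\in D_\Theta$, with witnesses $(a,b)\in\Phi_u^S$ such that $\Theta(a,b,c)$ holds for $c=h(a,b,x^3)$. Then some disjunct $\Psi_\tau(a,b,c)$ with $\tau\in\Lambda$ holds. By Proposition \ref{prop:fixed-inclusion-odd} (if $\ch(K)\ne2$) or Proposition \ref{prop:fixed-inclusion-2} (if $\ch(K)=2$), this gives $c\in\bigcap_{v\in\Delta(Q_{a,b})}\cO_v$. Hence the right-hand side of Daans' bridge \eqref{eq:Daans-bridge} holds with $z=x^3$, so $x^3\in\m_w$ for some $w\in V_K\setminus S$. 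Since $v_w(x^3)=3v_w(x)>0$ forces $v_w(x)>0$, we get $x\in\m_w$.

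\emph{Left inclusion.} Let $w\notin S\cup E_{\mathrm{exc}}$ with $m=v_w(x)>0$. I will assemble the constructions of the preceding sections in order.
\begin{enumerate}
\item Choose the local data $(a_v,b_v)$ at each $v\in S$ as in Sections 7 and 8, and use these to fix the index tuple $\mathbf i$ and a frozen parameter $\tau\in\Lambda$. In odd characteristic, $\tau$ is the one among the four candidates whose residue at $w$ avoids $0,\pm1$.
\item Choose $a$ by weak approximation, using Lemma \ref{lem:finite-field-odd} or the trace-one choice \eqref{eq:char2-a-residue} at $w$.
\item Produce $b$ via the Brauer-group and norm-approximation argument of the section on choosing global quaternion parameters. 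This yields $(a,b)\in\Phi_u^S$ with $\Delta(Q_{a,b})=S\cup\{w\}$.
\item Check local solubility of the branch $\Psi_\tau(a,b,c)$, $c=h(a,b,x^3)$, at every place. At $v\in S$ this follows from Proposition \ref{prop:local-freezing}, because $(a,b,c)$ stays in $U_{v,i_v}$ by continuity of $h$ and $\tau\in W_{v,i_v}$. At $w$ it follows from the Hensel arguments of Sections 7 and 8. At split places it follows from the matrix construction; at real places the algebra is split, so the same construction applies.
\item Conclude with the Hasse principle for the nondegenerate form \eqref{eq:quadric-odd} or \eqref{eq:quadric-2}, and use Lemma \ref{lem:affine-point} to obtain a global affine solution with $W=1$.
\end{enumerate}
The branch conditions $\delta_\tau\ne0$ (and $\tau\delta_\tau\ne0$ in characteristic $2$) hold globally because they hold locally. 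Thus $\Theta(a,b,c)$ holds, and $x\in D_\Theta$.

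\emph{Main obstacle.} The delicate point is the compatibility of all simultaneous choices. The frozen $\tau$ is chosen before $a$ and $b$, from the local data at $S$ alone, so it must still work after $a,b$ are globally perturbed. This is exactly what the open sets $U_{v,i_v}$ guarantee, via the continuity of $h$ ensured by \eqref{eq:nonzero-local-denominator}.

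A second point to check is nondegeneracy of the quadric globally. It suffices to check that the relevant coefficients are nonzero in $K$, and this follows from their being units at $w$. Concretely, $\delta_\tau c^2-16$ in odd characteristic and $c\delta_\tau\tau^2$ in characteristic $2$ are units at $w$. In characteristic $2$ with $c=0$ the formula holds trivially through the disjunct $c=0$, so nondegeneracy is only needed when $c\ne0$.

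\emph{Consequence \eqref{eq:full-union-from-sandwich}.} The right inclusion gives $\subseteq$, since $E_{\mathrm{exc}}\subseteq V_K\setminus S$. For $\supseteq$, every $w\in V_K\setminus S$ lies either in $E_{\mathrm{exc}}$ or outside $S\cup E_{\mathrm{exc}}$; in the latter case the left inclusion applies.
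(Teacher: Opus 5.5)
Your proposal is correct and follows the paper's proof essentially step for step: the right inclusion via Proposition \ref{prop:fixed-inclusion-odd} or Proposition \ref{prop:fixed-inclusion-2} together with the reverse direction of Daans' bridge \eqref{eq:Daans-bridge}, the left inclusion by assembling the local constructions and then applying the Hasse principle with Lemma \ref{lem:affine-point}, and the union identity as a formal consequence. One small slip: in characteristic $2$ the coefficient $c\delta_\tau\tau^2$ is not a unit at $w$, since it has valuation $v_w(c)=6m-2\ge4$; it is nonzero because this valuation is finite, and nonzeroness is all that nondegeneracy of \eqref{eq:quadric-2} requires.
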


\begin{proof}
Suppose first that $x\in\m_w$ for some
$w\notin S\cup E_{\mathrm{exc}}$.  Sections 7--10 construct
$(a,b)\in\Phi_u^S$, a frozen parameter $\tau\in\Lambda$, and local points of
the corresponding fixed-parameter equation at every completion.  Section 11
then produces a global point.  Hence $x\in D_\Theta$.

Conversely, suppose $x\in D_\Theta$.  Each branch in $\Theta$ satisfies the
unconditional inclusion in Proposition \ref{prop:fixed-inclusion-odd} or
Proposition \ref{prop:fixed-inclusion-2}.  Therefore
\[
 h(a,b,x^3)\in\bigcap_{v\in\Delta(Q_{a,b})}\cO_v.
\]
Daans' reverse implication \eqref{eq:Daans-bridge} gives
$x^3\in\m_w$ for some $w\notin S$, and hence $x\in\m_w$.  This proves
\eqref{eq:sandwich}.  Equality \eqref{eq:full-union-from-sandwich} follows
immediately: the left side is contained in the full union by the second
inclusion in \eqref{eq:sandwich}, while the first inclusion covers every place
outside $S\cup E_{\mathrm{exc}}$ and the displayed finite union covers the
remaining places.
\end{proof}

It is important here that we do \emph{not} enlarge $S$ after constructing the
finite family $\Lambda$: doing so would make the construction circular because
$\Lambda$ itself depends on $S$.  Instead, the finitely many exceptional
maximal ideals are adjoined in \eqref{eq:full-union-from-sandwich}.  Each
individual maximal ideal is $\exists^3$-definable: choose
$\varpi_v\in K$ with $v(\varpi_v)=1$; then
$ \m_v=\varpi_v\cO_v,$
and $\cO_v$ is $\exists_3$-definable by
\cite[Proposition~4.2]{Daans2024}.  Finite unions do not increase the maximum
existential variable count.

To return from $S$ to the originally prescribed $S_0\subseteq S$, use
\[
 \bigcup_{w\in V_K\setminus S_0}\m_w
 =\bigg(\bigcup_{w\in V_K\setminus S}\m_w\bigg)
  \cup\bigg(\bigcup_{v\in S\setminus S_0}\m_v\bigg).
\]
The second union is finite and again $\exists^3$-definable.

We now count variables.  The parameters $a,b$ contribute two variables.
The condition $(a,b)\in\Phi_u^S$ is $\exists^3$.  The condition
\[
 \Theta\bigl(a,b,h(a,b,x^3)\bigr)
\]
is $\exists^3$.  By Theorem \ref{thm:DDF}, their conjunction, regarded as a
condition in the common free variables $(x,a,b)$, is $\exists^{3+3-1}$.
Quantifying $a$ and $b$ gives
\[
 2+(3+3-1)=7.
\]
Thus
\[
 \bigcup_{w\in V_K\setminus S_0}\m_w
\]
is $\exists^7$-definable.  The duality \eqref{eq:duality} proves
Theorem \ref{thm:main}.

The global proof already includes $K=\mathbb Q$.  There is also a useful
rational-only algebraic observation, which is not needed for
Theorem \ref{thm:main}.  Put
\[
 A=1+4a^2\ \ \t{and}\ \ B=2b.
\]
In the odd-characteristic fixed-parameter equation, the choices
\[
 \tau_0=0\ \ \t{and}\ \ 
 \tau_1=\frac{2a}{1+4a^2}
\]
introduce no additional quantified variable.  Since
$1-A\tau_1^2=1/A,$
they give, after clearing the displayed scalar denominator, the two
three-variable equations
\begin{align}
 c^2-Ay^2-16B(r^2-As^2)&=16,\label{eq:Q-special-0}\\
 c^2-Ay^2-16AB(r^2-As^2)&=16A.\label{eq:Q-special-1}
\end{align}
Every solution of either equation yields two reduced-norm-one quaternion
elements whose reduced traces sum to $c$.  Consequently it forces $c$ to be
integral at every finite ramified prime of $(A,B)_{\mathbb Q}$, exactly as in
Proposition \ref{prop:fixed-inclusion-odd}.  These two explicit specializations can be
used in place of a general torus variable in a rational-field construction.
The complete global proof above is used for the existence and local-to-global
steps; no unproved rational case division is required here.

\Ack. The author is grateful to Prof. Yong Hu and Dr. Geng-Rui Zhang for their helpful comments.
This approach was initially motivated by AI's analysis of Daans's paper \cite{Daans2024}.

\end{document}